\numberwithin{equation}{section}
\newtheorem{theorem}{Theorem}[]
\newtheorem{lemma}[theorem]{Lemma}
\newcommand\Item[1][]{%
  \ifx\relax#1\relax  \item \else \item[#1] \fi
  \abovedisplayskip=0pt\abovedisplayshortskip=0pt~\vspace*{-\baselineskip}}
\theoremstyle{definition}
\theoremstyle{definition}
\theoremstyle{definition}
\newtheorem{corollory}[theorem]{Corollory}
\theoremstyle{definition}
\title{The direct product of a star and a path is antimagic}
\date{}
\author{Vinothkumar Latchoumanane$^{1}$, Murugan Varadhan$^{2}$, Andrea Semani\v{c}ov\'{a}-Fe\v{n}ov\v{c}\'{i}kov\'{a}$^{3,4}$}
\address{${}^{1,2}$Department \ of \ Mathematics, \ School \ of \ Advanced \ Sciences, \  Vellore \ Institute \ o f\  Technology, \  Vellore \ - \ $632014$,\ India.} 
\address{${}^{3}$ Department \ of  \ Applied  \ Mathematics \ and \ Informatics, \ Technical \ University, \ Letn\'a~9, \ Ko\v{s}ice,  \ Slovak \ Republic}
\address{${}^{4}$ Division \ of \ Mathematics, \ Saveetha \ School \  of \ Engineering, \ SIMATS, \ Chennai, \ India}
\email{${}^{1}$vinothkumar.l2019@vitstudent.ac.in, ${}^{2}$murugan.v@vit.ac.in, ${}^{3,4}$andrea.fenovcikova@tuke.sk}
\begin{document}
\bibliographystyle{plainnat}

\begin{abstract}
A graph $G$ is antimagic if there exists a bijection $f$ from $E(G)$ to $\left\{1,2, \dots,|E(G)|\right\}$ such that the vertex sums for all vertices of $G$ are distinct, where the vertex sum  is defined as the sum of the labels of all  incident edges.
 Hartsfield and Ringel   conjectured that every connected graph other than $K_2$ admits an antimagic labeling. It is still a challenging problem to address antimagicness in the case of disconnected graphs. In this paper, we study antimagicness for the disconnected graph that is constructed as the direct product of a star and a path.
\end{abstract}

\subjclass[2020]{}Primary 05C78, Secondary 05C76

\keywords{ Direct product, Tensor product, Kronecker product, Star, Path, Disconnected graph, Antimagic Labeling.}

\maketitle

\section{Introduction}
In 1990, the concept of antimagic labeling was first introduced by Hartsfield and Ringel \cite{Ringel1}. They called a graph $G$ {\it antimagic} if there exists a bijection $f: E(G) \to \left\{1,2, \dots, \left|E(G)\right|\right\}$ such that for all vertices their weights,   defined as the sum of all the incident edge labels,  are distinct.  In the same introduction paper \cite{Ringel1} some simple graphs such as paths, cycles, complete graphs and wheels   are proven to be antimagic and, Hartsfield and Ringel posed the strong conjecture that every connected graph except $K_2$ is antimagic.

 Although several researchers have attempted to settle the above conjecture,  still the conjecture remains open. For instance, Alon et al., \cite{Alon} validated the conjecture for graphs with sufficiently large minimum degree. They proved  that there exists an absolute constant $c$ such that every graph with $n$ vertices and minimum degree at least $c \log n$ is antimagic. They also proved that every graph with at least for vertices and the maximum degree $\Delta (G) \geq n-2$ is antimagic. A similar result was improved by Yilma \cite{Yilma} for $\Delta (G) \geq n-3$.
  
Only some authors studied antimagicness for disconnected graphs. Not all   disconnected graphs admit an antimagic labeling. Trivially, the graph $K_2 \cup G$ is not antimagic for any graph $G$. Exploring the set of all disconnected antimagic graphs seems to be another interesting open problem.
 In this context, Wang et al. \cite{WANG} studied antimagicness of  unions cycles, stars and paths. More precisely, they proved that $C_n \cup K_{1,k}$ for $n \geq 3$ and $k \geq 2 \sqrt{n} + 2$, two copies of a path $2P_n$, $n\ge4$, and a union of a cycle and a path, $K_{1,n} \cup P_n$ and $K_{1,n} \cup P_{n+1}$  for $n \geq 3$ are antimagic graphs.
  Shang et al. \cite{ShangStar} considered a star forest with at most one star $K_{1,2}$ as its components and no component  isomorphic to $K_{1,1}$. They proved that $mK_{1,2} \cup K_{1,n}$, $n\geq 3$ is antimagic if and only if $m \leq \min \left\{2n+1,(2n-5+\sqrt{8n^2-24n+17})/2\right\}$. Shang    \cite{Shang2}  showed that a linear forest with no component isomorphic to $P_2$, $P_3$ and $P_4$ is antimagic. 
  Chen et al. \cite{Chenn}   proved that $mK_{1,2} \cup K_{1,n}$, $ n\geq 3$ is antimagic if and only if $n \geq \max \left\{(m-1)/{2},(1-2m+\sqrt{8m^2+16m+9})/{2}\right\}$. They also gave a necessary condition and a sufficient condition for a star forest $mK_{1,2} \cup mK_{1,n_1} \cup mK_{1,n_2} \cup \dots \cup mK_{1,n_k}$, $n_1,n_2,\dots,n_k \geq 3$ to be antimagic. 
  In addition, they proved that a star forest with an extra disjoint path is antimagic. 
   Many works related to the antimagicness for product of graphs was discussed by various authors for connected graphs. The readers can refer to the following references: for the Cartesian product \cite{ycheng1,OudoneMCS,WangDM,ChengDM,LiangTCS}, for the lexicographic product \cite{WenhuiAKCE, YingyuMCS, YingyuACTA}, for the corona product \cite{Daykin}, for the join of graphs \cite{Baca,TaoWANG}. However, none of the researchers focused on the antimagicness of the direct product of some graph. 
   

  The direct product of graphs was first considered by  Weichsel \cite{Weichsel} in  1962, which was originally derived from Kronecker product of matrices. There are several names used for the direct product of graphs that are used by different authors. Those are cardinal product, Kronecker product, tensor product, categorical product and graph conjunction. The direct product of graphs $G$ and $H$, denoted by $G \times H$,  is the graph with the vertex sets same as has the Cartesian product of these graphs, i.e.,  $V(G \times H)=V(G) \times V(H)$ such that the vertex pairs $(x,y)$ and $(x',y')$ are adjacent in $G \times H $ if and only if $x$ is adjacent to $x'$ in $G$ and $y$ is adjacent to $y'$ in $H$.
The connectedness of the direct product of two graphs is characterized  in the following theorem.
\begin{theorem} \cite{Weichsel} \label{Connectedwei}
Let $G$ and $H$ be   connected graphs. The direct product   $G \times H$  connected if and only if  either $G$ or $H$ contains an odd cycle.
\end{theorem}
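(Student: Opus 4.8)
The plan is to reduce both directions to one elementary principle: a walk of length $\ell$ in $G \times H$ from $(u,v)$ to $(u',v')$ is exactly the same data as a walk of length $\ell$ from $u$ to $u'$ in $G$ together with a walk of length $\ell$ from $v$ to $v'$ in $H$, simply by projecting onto the two coordinates. Hence $(u,v)$ and $(u',v')$ lie in the same component of $G\times H$ if and only if, for some integer $\ell\ge 0$, there is a $u$--$u'$ walk of length $\ell$ in $G$ \emph{and} a $v$--$v'$ walk of the \emph{same} length $\ell$ in $H$. I would state and prove this projection lemma first, since everything else follows from it; throughout I assume $G$ and $H$ each have at least one edge, to dispose of the degenerate edgeless case.

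For the ``only if'' direction, suppose neither $G$ nor $H$ contains an odd cycle, equivalently both are bipartite, with bipartitions $V(G)=A_G\cup B_G$ and $V(H)=A_H\cup B_H$ (all four parts nonempty, as $G$ and $H$ are connected with an edge). Put $S=(A_G\times A_H)\cup(B_G\times B_H)$. Since any edge of $G\times H$ flips the $G$-coordinate across $A_G\mid B_G$ and simultaneously flips the $H$-coordinate across $A_H\mid B_H$, a short case check shows every edge of $G\times H$ has both ends in $S$ or both ends in its complement. Thus $S$ is a union of components of $G\times H$, and since $S$ and its complement are both nonempty, $G\times H$ is disconnected.

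For the ``if'' direction, assume without loss of generality that $G$ contains an odd cycle, so $G$ is non-bipartite, and that $H$ is connected. The crucial step is a ``length realizability'' lemma for $G$: there is a constant $N_G$ such that for every ordered pair $a,b\in V(G)$ and every $\ell\ge N_G$ there is an $a$--$b$ walk of length exactly $\ell$. Indeed, connectedness gives a walk of some length; backtracking along one edge adds $2$ to the length; and a detour once around the fixed odd cycle changes the parity of the length; so all sufficiently large lengths of both parities are attained, and the bound can be taken uniform over the finitely many pairs. Now given $(u,v)$ and $(u',v')$, choose any $v$--$v'$ walk in $H$, of length $d$, pick $\ell\ge\max(N_G,d)$ with $\ell\equiv d\pmod 2$, pad the $H$-walk up to length $\ell$ by backtracking, and take a $u$--$u'$ walk of length $\ell$ in $G$ from the lemma; the projection principle then joins $(u,v)$ to $(u',v')$. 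Hence $G\times H$ is connected.

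I expect the main obstacle to be the parity/length bookkeeping in the ``if'' direction: carefully justifying that backtracking contributes exactly $+2$, that one traversal of the odd cycle flips parity, and above all that these moves make \emph{every} sufficiently large length achievable with a single uniform threshold $N_G$. The bipartite obstruction in the converse, by contrast, is a one-line coloring argument once the projection lemma is in hand.
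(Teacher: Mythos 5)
Your proposal is correct, but there is nothing in the paper to compare it against: Theorem \ref{Connectedwei} is imported from Weichsel's paper and is stated without proof, so the paper has no ``own proof'' of this statement. Your argument is essentially the classical one. The projection principle (a length-$\ell$ walk in $G\times H$ is exactly a pair of length-$\ell$ walks in the factors) is the right engine; the bipartition set $S=(A_G\times A_H)\cup(B_G\times B_H)$ gives the disconnection when both factors are bipartite, and the odd cycle is used precisely to repair parity so that walks of every sufficiently large common length exist, which yields connectedness in the other direction. Both halves are sound: the edge-case check that no edge of $G\times H$ leaves $S$ is immediate from bipartiteness of each factor, and the length-realizability lemma (backtracking gives $+2$, a detour around the odd cycle flips parity, uniformity by finiteness) is a standard and correctly executed step. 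Your added hypothesis that each factor has at least one edge is in fact needed: as literally stated the theorem fails for $K_1\times K_1$, and Weichsel's original formulation assumes nontrivial connected factors, so flagging that assumption is a point in your favour rather than a gap.
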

\begin{corollory} \label{Exactly} \cite{Weichsel}
If $G$ and $H$ are connected graphs with no odd cycles then the direct product  $G  \times H$ has exactly two connected components.
\end{corollory}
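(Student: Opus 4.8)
The approach is to exploit the bipartite structure forced on the two factors. A connected graph contains no odd cycle if and only if it is bipartite, so I would begin by fixing the (essentially unique) bipartitions $V(G)=A_0\cup A_1$ and $V(H)=B_0\cup B_1$; assuming $G$ and $H$ each contain at least one edge, all four classes $A_0,A_1,B_0,B_1$ are nonempty. Using $V(G\times H)=V(G)\times V(H)$, I would split the vertex set of the product as
\[
U_0=(A_0\times B_0)\cup(A_1\times B_1)\qquad\text{and}\qquad U_1=(A_0\times B_1)\cup(A_1\times B_0),
\]
both nonempty. The first step is the observation that $G\times H$ has no edge between $U_0$ and $U_1$: an edge joining $(x,y)$ to $(x',y')$ forces $x$ adjacent to $x'$ in $G$ and $y$ adjacent to $y'$ in $H$, so by bipartiteness $x,x'$ lie in different classes of $G$ and $y,y'$ lie in different classes of $H$, and therefore $(x,y)$ and $(x',y')$ stay in the same $U_i$. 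This already recovers, in this setting, the disconnectedness asserted by Theorem~\ref{Connectedwei}; the real content is that the subgraphs induced by $U_0$ and by $U_1$ are each connected.

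For connectedness I would use the description of walks in a direct product that is immediate from its definition: a walk of length $\ell$ in $G\times H$ is exactly a pair of walks of length $\ell$, one in $G$ and one in $H$, taken coordinatewise. Fix $(x,y)$ and $(x',y')$ in a common set $U_i$. From the definition of $U_i$, the vertices $x$ and $x'$ lie in a common class of $G$ precisely when $y$ and $y'$ lie in a common class of $H$, and since $G$ and $H$ are connected and bipartite this says exactly that $d_G(x,x')\equiv d_H(y,y')\pmod 2$. In a connected graph with at least one edge, any two vertices $u,u'$ are joined by a walk of length $\ell$ for every $\ell\ge d(u,u')$ with $\ell\equiv d(u,u')\pmod 2$ (prolong a shortest path from $u$ to $u'$ by running back and forth along an edge incident to $u'$). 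Hence, with $\ell=\max\{d_G(x,x'),d_H(y,y')\}$, there is a walk of length $\ell$ from $x$ to $x'$ in $G$ and a walk of length $\ell$ from $y$ to $y'$ in $H$; pairing them coordinatewise produces a walk from $(x,y)$ to $(x',y')$ in $G\times H$, which by the first step lies entirely inside $U_i$. So $U_0$ and $U_1$ each induce a connected subgraph, and $G\times H$ has exactly two components.

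I expect the only subtle point, and hence the main obstacle, to be the parity bookkeeping: one must argue carefully that ``$(x,y)$ and $(x',y')$ belong to the same class $U_i$'' is equivalent to ``$d_G(x,x')$ and $d_H(y,y')$ have the same parity'', since it is exactly this matching of parities that makes it possible to choose a single walk length $\ell$ attainable simultaneously in $G$ and in $H$. Once that is pinned down, the remainder is routine use of the definition of the direct product together with the standard relationship between walk lengths and bipartiteness.
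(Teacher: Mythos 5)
Your proposal is correct and complete. Note that the paper itself gives no proof of this corollary---it is quoted from Weichsel's paper on the Kronecker product---and your argument is essentially the classical one from that source: split $V(G\times H)$ according to the parity classes of the two bipartitions, observe that product edges preserve the combined parity, and then realize any pair of vertices in the same class as endpoints of coordinatewise walks of a common length, which is possible precisely because the two distances have equal parity and walks can be lengthened by steps of two. Your explicit caveat that $G$ and $H$ must each contain an edge is also well taken, since the statement is false for trivial factors such as $K_1$; this hypothesis is implicit in the cited source and is satisfied in all uses in this paper.
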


In this paper we study the antimagicness of the direct product of a star and a path. Our main result is the following.
\begin{theorem}\label{main}
The graph $K_{1,s}\times P_{n}$ is antimagic for all positive integers $s$, $n\ge 2$ except three cases when  $(s,n)\in \{(1,2), (1,3), (2,2)\}$.
\end{theorem}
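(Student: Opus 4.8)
The plan is to read off the combinatorial structure of $G:=K_{1,s}\times P_{n}$, to peel off the pendant edges lying at the two ends of the path, and then to label the remaining ``core'' block by block along the path. Write $c,v_{1},\dots,v_{s}$ for the vertices of $K_{1,s}$ (with centre $c$) and $u_{1},\dots,u_{n}$ for the path. Every edge of $G$ joins a vertex $(c,u_{i})$, which I call a \emph{hub}, to a vertex $(v_{k},u_{j})$ with $|i-j|=1$, a \emph{spoke}; so $G$ is bipartite, the hub $(c,u_{i})$ has degree $s$ if $i\in\{1,n\}$ and $2s$ otherwise, and the spoke $(v_{k},u_{j})$ has degree $1$ if $j\in\{1,n\}$ and $2$ otherwise. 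By Corollary~\ref{Exactly}, $G$ has exactly two components. For small parameters $K_{1,s}\times P_{2}\cong 2K_{1,s}$, $K_{1,s}\times P_{3}\cong K_{2,s}\cup K_{1,2s}$ and $K_{1,1}\times P_{n}\cong 2P_{n}$; specialising further, the pairs $(1,2),(1,3),(2,2)$ give $2K_{2}$, $2P_{3}$ and $2K_{1,2}$, and for each of these a short direct check forces two vertex sums to coincide, which settles the ``except'' part of the theorem.

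I would then dispatch the degenerate families. For $n=2$ one splits the labels of $2K_{1,s}$ between the two stars so that neither centre sum lies in $\{1,\dots,2s\}$ and the two centre sums differ, which is possible precisely when $s\ge 3$. For $n=3$, $s\ge 2$, one gives $K_{1,2s}$ the $2s$ largest labels and $K_{2,s}$ the $2s$ smallest ones, labelling $K_{2,s}$ directly. For $s=1$, $n\ge 4$, the graph is $2P_{n}$, which is antimagic by Wang et al.~\cite{WANG}. The family $s=2$, that is $P_{3}\times P_{n}$, is the delicate one, since there the hub sums and the two-label spoke sums have comparable size; I would handle it by a dedicated construction tuned to the parity of $n$, resembling but more careful than the one below.

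For the remaining range ($s\ge 3$, $n\ge 4$) the main construction runs as follows. Give the $2s$ pendant edges $(v_{k},u_{1})(c,u_{2})$ and $(v_{k},u_{n})(c,u_{n-1})$ the $2s$ smallest labels $\{1,\dots,2s\}$. Then every other vertex meets only labels $\ge 2s+1$ --- at least $s$ of them if it is a hub, exactly two if it is a non-pendant spoke --- so every hub sum and every non-pendant spoke sum exceeds $2s$; hence the $2s$ pendant vertices, whose sums are $1,\dots,2s$, clash with nothing else. It remains to label the \emph{core} $C:=G-\{(v_{k},u_{1}),(v_{k},u_{n}):1\le k\le s\}$ --- a disjoint union of two chains of copies of $K_{2,s}$ glued successively along single degree-$s$ vertices, in which every spoke has degree exactly $2$ --- with $\{2s+1,\dots,2s(n-1)\}$ so that all its sums (now including the contributions $1,\dots,2s$ that the reserved edges make to the hubs over $u_{2}$ and $u_{n-1}$) are distinct. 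For this I process the $n-1$ gaps of the path from left to right, assigning a block of $2s$ consecutive labels to the $2s$ edges across each gap (blocks of size $s$ at the first and last gap, whose complementary halves are the reserved pendants), and inside each block I give the $s$ edges meeting the left hub the lower half and the $s$ edges meeting the right hub the upper half. Then each interior hub collects $2s$ consecutive labels, so the interior hub sums form an increasing sequence with common difference $4s^{2}$, while each spoke sum is (lower-half label) $+$ (upper-half label) from two adjacent blocks, landing strictly between consecutive hub sums and spaced by about $4s$; a short computation then gives distinctness of all core sums, hence of all vertex sums of $G$.

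The hard part, and the source of a short list of exceptional small cases, is the behaviour at the two ends. After peeling, the hubs over $u_{1},u_{2},u_{n-1},u_{n}$ carry only $s$ core edges each, so their sums escape the generic $4s^{2}$-spaced pattern and must be placed by hand; a rough computation puts three of them near $\tfrac{5}{2}s^{2}$, $5s^{2}$ and $(2n-3)s^{2}$, so the sums over $u_{2}$ and over $u_{n-1}$ collide exactly when $n=4$, and whether the remaining end sums collide with an interior hub sum or with a spoke sum depends on the parity of $n$ (the middle hub over $u_{\lceil n/2\rceil}$ sits differently relative to the short end sums when $n$ is even and when $n$ is odd). So the generic argument must be supplemented by a separate labelling for $n=4$, where $C\cong 2K_{2,s}$, by choosing the split of $\{1,\dots,2s\}$ between the two pendant families according to the parity of $n$, and by an explicit check of the few smallest still-open pairs (a handful of values of $n$, together with $s\in\{3,4\}$ for the very smallest $n$) where the crude estimates are not yet conclusive. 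Packaging the core step as a lemma on $(2s)$-shifted antimagic labellings keeps this bookkeeping under control.
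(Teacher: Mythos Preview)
Your plan follows the same skeleton as the paper's proof: identify the hub/spoke structure, reserve the smallest $2s$ labels for the pendant edges so that the degree-$1$ sums are automatically segregated, label the remaining edges in blocks along the path so that hub sums grow in steps of order $s^{2}$ while spoke sums grow in steps of order $s$, and treat small $n$ and $s=1,2$ separately. The one organisational difference is that the paper labels the two components one after the other (so one component gets the lower half of the non-pendant labels and the other the upper half), whereas you interleave them gap by gap; both schemes give the same kind of arithmetic progressions of hub and spoke sums, so this is a matter of taste.

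Where your outline is genuinely weaker than the paper is in the handling of the endpoint hubs. You correctly identify that the hubs over $u_{1},u_{2},u_{n-1},u_{n}$ break the generic pattern and can collide with interior hub sums or with spoke sums, and you propose to fix this by choosing the split of $\{1,\dots,2s\}$ between the two pendant families according to parity, by a separate labelling for $n=4$, and by ``an explicit check of the few smallest still-open pairs''. The paper avoids all of this with a single device: it does not fix the partition of $\{1,\dots,2s\}$ but parametrises it by an integer $\varepsilon\in\{0,1,\dots,s^{2}\}$ controlling $\sum_{j}f(a_{0}^{j}v_{0})$, observes that only the two endpoint-adjacent hub sums depend on $\varepsilon$, and then argues that since spoke sums are spaced by $4$ and interior hub sums by at least $4s^{2}-2s$, at least one $\varepsilon\in\{0,1,2,3\}$ dodges every collision. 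This replaces your open-ended list of residual checks by a two-line counting argument, and it is the step you are missing.

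Finally, your treatment of $s=2$ is only a promise (``a dedicated construction tuned to the parity of $n$''). In the paper this case does require a separate explicit labelling for the even-path family (Lemma~\ref{even, s=2}), while for odd paths the general construction already covers $s=2$; you should expect to have to write out at least one of these in full rather than defer it.
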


According to  Corollary  \ref{Exactly} the direct product   $K_{1,n} \times P_m$ has exactly two connected components.
Evidently, when $s=1$   the graph  $K_{1,s}\times P_{n}$ is isomorphic to two copies of the path $P_n$. Trivially, $2P_2$ is not antimagic. In \cite{WANG} Wang, Lui and Li proved that $mP_3$ is not antimagic for $m\ge 2$. Moreover, they proved that the union of two copies of a path on at least four vertices is an antimagic graph. This immediately implies that $K_{1,1}\times P_{n}$ is antimagic if and only if $n\ge 4$.
 
To complete the proof of Theorem  \ref{main} for $s\ge 2$ we distinguish two cases according to the parity on $n$. These cases are discussed in the following two sections.
\section{ A path  on even number of vertices}
First consider a graph $K_{1,s}\times P_{2m+2}$, $s\ge 2$, $m\ge 0$. This graph is disconnected and consists of two isomorphic copies. Let us denote the vertices and edges of $K_{1,s}\times P_{2m+2}$ in the following way.
\begin{align*}
V(K_{1,s}\times P_{2m+2})=&\{a_{i}^j, b_i^{j}, v_i, u_i: i=0, 1,  \dots, m, j=1, 2, \dots, s\},\\
E(K_{1,s}\times P_{2m+2})=&\{a_{i}^jv_i, b_i^{j}u_i : i=0, 1,  \dots, m, j=1, 2, \dots, s\}\\
&\cup \{a_{i}^jv_{i-1}, b_i^{j}u_{i-1} : i= 1, 2 \dots, m, j=1, 2, \dots, s\}.
\end{align*}

Before all else we solve two small cases. More precisely, we consider two special cases when $m=0$, i.e., the direct product $K_{1,s} \times P_{2}$ for $s\ge 2$ and   the case when $s=2$, i.e., the direct product $K_{1,2} \times P_{2m+2}$  is antimagic for $m \geq 1$.  

\begin{lemma}\label{P2}
The graph $K_{1,s} \times P_{2}$  is antimagic for $s \geq 3$.	
\end{lemma}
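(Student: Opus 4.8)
The plan is to exploit the extremely simple structure of $K_{1,s}\times P_{2}$. By Corollary~\ref{Exactly} this graph has exactly two components, and unravelling the adjacency rule of the direct product shows that each component is a copy of $K_{1,s}$: in the notation fixed above (with $m=0$) the graph is the star centred at $v_0$ with leaves $a_0^1,\dots,a_0^s$ together with the star centred at $u_0$ with leaves $b_0^1,\dots,b_0^s$. In particular $|E(K_{1,s}\times P_{2})|=2s$. The crucial observation is that for \emph{any} bijection $f\colon E(K_{1,s}\times P_2)\to\{1,\dots,2s\}$ the $2s$ leaf weights are automatically pairwise distinct, since the weight of a leaf is just the label of its unique incident edge. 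Hence the whole labeling problem reduces to arranging that the two centre weights differ from each other and that neither of them lies in $\{1,\dots,2s\}$.

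The key step is the choice of which labels sit on which star; a split by parity does the job. I would put the odd labels $1,3,\dots,2s-1$ on the $s$ edges incident with one centre and the even labels $2,4,\dots,2s$ on the $s$ edges incident with the other centre (the order within a star being irrelevant). Then the two centre weights equal $S_1=1+3+\dots+(2s-1)=s^{2}$ and $S_2=2+4+\dots+2s=s^{2}+s$, which are distinct, and for $s\ge 3$ both exceed $2s$ because $s^{2}\ge 3s>2s$. Therefore no centre weight collides with a leaf weight, all $2s+2$ vertex weights are pairwise distinct, and $f$ is an antimagic labeling.

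The only place where anything can go wrong is the small-$s$ boundary: the estimate $s^{2}>2s$, which is what pushes the centre weights past the interval $\{1,\dots,2s\}$, fails precisely for $s\le 2$, and this matches the exclusion of $(s,n)=(2,2)$ in Theorem~\ref{main} (there $K_{1,2}\times P_{2}\cong 2P_3$, known not to be antimagic). For $s\ge 3$ the parity split settles everything uniformly, with no further case distinction, so the proof comes down to recording the construction and checking the two weight values --- a routine computation. I do not expect any genuine obstacle here; the substance of the paper lies in the later sections treating $K_{1,s}\times P_{2m+2}$ with $m\ge 1$ and the odd-path case.
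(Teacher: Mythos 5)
Your proof is correct, but it takes a different route from the paper. The paper disposes of this lemma purely by citation: it invokes Sampathkumar's theorem that $G \times K_2 \cong 2G$ for any connected graph $G$ with no odd cycle, so $K_{1,s}\times P_2 \cong 2K_{1,s}$, and then quotes the result of Wang, Liu and Li that $2K_{1,s}$ is antimagic for $s\ge 3$ (noting, as you do, that $2K_{1,1}$ and $2K_{1,2}\cong 2P_3$ are not antimagic). You perform the same structural reduction, but instead of citing the known antimagicness of $2K_{1,s}$ you reprove it on the spot with an explicit labeling: odd labels $1,3,\dots,2s-1$ on one star, even labels $2,4,\dots,2s$ on the other, giving centre weights $s^2$ and $s^2+s$, which are distinct and exceed every leaf weight $\le 2s$ precisely when $s\ge 3$. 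The computation is right, and the boundary analysis ($s^2>2s$ failing for $s\le 2$) correctly mirrors the excluded case $(s,n)=(2,2)$. What your version buys is self-containedness and an explicit construction; what the paper's version buys is brevity and consistency with its practice of leaning on the literature for the small cases so that the new content (the $m\ge 1$ constructions) stands out. Either argument is acceptable here.
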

\begin{proof}
Sampathkumar \cite{Sampathkumar}   proved that if a  connected graph $G$ contains  no odd cycle then $G \times K_2$ is isomorphic to $2G$.
Thus the graph $K_{1,s} \times P_{2}$  is   isomorphic to two copies of the star $K_{1,s}$. Evidently, the graphs $2K_{1,1}$ and $2K_{1,2}\cong 2P_3$ are not antimagic. In \cite{WANG} is proved that $2K_{1,s}$ is antimagic for $s\ge 3$.
\end{proof}

\begin{lemma}\label{even, s=2}
The graph $K_{1,2} \times P_{2m+2}$  is antimagic for $m \geq 1$.	
\end{lemma}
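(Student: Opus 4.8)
The plan is to build an explicit antimagic labeling $f$ of $G:=K_{1,2}\times P_{2m+2}$ and check that all vertex sums are distinct. First I would record the structure. By Corollary~\ref{Exactly}, $G$ has exactly two components, and with the notation fixed above one component $A$ is induced by $\{v_i\}\cup\{a_i^j\}$ and the other $B$ by $\{u_i\}\cup\{b_i^j\}$, with $A\cong B$. Inside $A$ the vertices $v_0,\dots,v_{m-1}$ have degree $4$, the vertices $a_0^1,a_0^2$ have degree $1$, and every remaining vertex (namely $v_m$ and $a_i^j$ with $i\ge1$) has degree $2$; moreover each $v_{i-1}$ together with $a_i^1,a_i^2$ spans a $4$-cycle, so $A$ is a chain of $m$ quadrilaterals sharing the vertices $v_0,\dots,v_m$ in order with the two pendant edges $a_0^1v_0,a_0^2v_0$ hung at one end (equivalently, $G\cong P_3\times P_{2m+2}$). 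Writing $x_i^j:=a_i^jv_i$ and $y_i^j:=a_i^jv_{i-1}$, the vertex sums in $A$ are $w(a_0^j)=f(x_0^j)$, $w(a_i^j)=f(x_i^j)+f(y_i^j)$ for $i\ge1$, $w(v_m)=f(x_m^1)+f(x_m^2)$, and $w(v_i)=f(x_i^1)+f(x_i^2)+f(y_{i+1}^1)+f(y_{i+1}^2)$ for $0\le i\le m-1$ (with $x_0^1,x_0^2$ the pendant edges), and analogously in $B$.

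Second, I would split $\{1,\dots,8m+4\}$ into two blocks of $4m+2$ labels, assign one block to $A$ and the other to $B$ (after a small perturbation if needed), and within each component prescribe the labels of the pendant edges, of the ``down'' edges $x_i^j$, and of the ``up'' edges $y_i^j$ by explicit formulas in $i$ and $j$. The design target is that the vertex sums split into families --- pendants of $A$, pendants of $B$, degree-$2$ vertices of $A$, degree-$2$ vertices of $B$, degree-$4$ vertices of $A$, degree-$4$ vertices of $B$ --- each behaving like an arithmetic progression that is strictly increasing in $i$, with the six families occupying pairwise disjoint ranges. The irregular vertices $a_0^1,a_0^2$, the end vertex $v_m$ (central but only of degree $2$), and $v_0$ (of degree $4$ but carrying the two pendant edges), together with their counterparts in $B$, would be pinned down by hand so as not to clash with the generic pattern.

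The main obstacle is the degree-$2$ block, which is by far the largest: there are $2(2m+1)$ such vertices, and their sums, being sums of two labels, sweep out a wide interval, so keeping the $A$-family and the $B$-family disjoint while simultaneously keeping each of them strictly monotone in $i$ forces a rather rigid pairing of the ``up'' and ``down'' labels, and the parity or residue of $m$ is likely to enter the formulas (possibly demanding a separate check for small $m$, essentially the $K_{1,2}\times P_4$ case). Once the labeling is written down, what remains is a routine but lengthy verification that finitely many explicit progressions are internally injective and pairwise disjoint; I would organize it as a short case analysis on the degrees of the two vertices being compared, invoking the block structure above, together with a direct check of the $O(1)$ exceptional vertices.
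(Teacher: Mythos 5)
Your write-up is a plan, not a proof. You correctly describe the structure of $K_{1,2}\times P_{2m+2}$ (two isomorphic components, each a chain of $m$ quadrilaterals through $v_0,\dots,v_m$ with two pendant edges at $v_0$) and you state the right strategy (explicit labeling, weights organized into monotone families occupying disjoint ranges), but the content of the lemma is precisely the explicit labeling together with the verification that all weights are distinct, and neither appears. You defer it (``once the labeling is written down, what remains is a routine but lengthy verification'') and, more seriously, you flag the one genuine difficulty --- keeping the $2(2m+1)$ degree-$2$ weights of the two components injective and disjoint from everything else --- as an unresolved ``main obstacle'' whose resolution you only speculate about (``the parity or residue of $m$ is likely to enter,'' ``possibly demanding a separate check for small $m$''). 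Since no candidate formulas are given, there is nothing that can be checked, and the proposal does not establish the lemma.

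For comparison, the paper resolves exactly this obstacle with a single uniform construction valid for all $m\ge 1$: the pendant edges get $1,2,3,4$, the $b$-edges at level $i$ get $3+2i$ and $4+2i$ shifted by $4m(j-1)$, and the $a$-edges get the same values shifted by $2m(2j-1)$. The payoff is that the weights fall into distinct residue classes modulo $4$: the degree-$1$ weights are $1,\dots,4$; every degree-$2$ weight equals $7+4i+8m(j-1)$ or $4m+7+4i+8m(j-1)$, hence is $\equiv 3\pmod 4$ and these sweep out the distinct values $11,15,\dots,16m+7$; the interior degree-$4$ weights are $\equiv 2\pmod 4$ and strictly increasing; $wt(v_0),wt(u_0)$ are $\equiv 1\pmod 4$; and $wt(v_m),wt(u_m)$ are divisible by $4$. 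Distinctness is then immediate, with no case split on the parity of $m$ and no separate small-$m$ check. If you want to salvage your approach, you must supply formulas of this kind and carry out the verification; as it stands the argument has a gap exactly where the work lies.
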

\begin{proof}
Let us define an edge labeling $f$ of $K_{1,2} \times P_{2m+2}$ in the following way:
\begin{align*}
f(a_{0}^{j}v_0) =& 2+j, &&\hspace*{-0cm} \text{for $j=1,2$},\\
f(b_{0}^{j}u_0)=& j, &&\hspace*{-0cm}  \text{for $j=1,2$},\\
f(b_{i}^{j}u_{i-1}) =& 3+2i +4m(j-1), &&\hspace*{-0cm}  \text{for   $i=1, 2, \dots, m$ and $j=1, 2$},\\
f(b_{i}^{j}u_{i}) =& 4+2i +4m(j-1), &&\hspace*{-0cm}  \text{for   $i=1, 2, \dots, m$ and $j=1, 2$},\\
f(a_{i}^{j}v_{i-1}) =& 3+2i +2m(2j-1), &&\hspace*{-0cm}  \text{for   $i=1, 2, \dots, m$ and $j=1, 2$},\\
f(a_{i}^{j}v_{i}) =& 4+2i +2m(2j-1), &&\hspace*{-0cm}  \text{for   $i=1, 2, \dots, m$ and $j=1, 2$}.
\end{align*}
Evidently, the edges are labeled with distinct numbers from $1$ to $8m+4$. Now, we evaluate the induced vertex labels under the function $f.$ The weights of vertices of degree one are
\begin{align*}
wt_{f}(a_0^j)=& f (a_0^j v_0)=2+j,\\
wt_{f}(b_0^j)=& f (u_{0}b_{0}^{j})=j
\end{align*}
for $j=1,2$, thus the weights are  $1, 2, 3$ and $4$.\\
Now we evaluate the weights of vertices of degree 2. For     $i=1, 2, \dots, m$ and $j=1, 2$ we obtain
\begin{align*}
wt_{f} (a_{i}^{j}) =& f(v_{i-1}a_{i}^{j})+f(a_{i}^{j}v_i) = 4m+7+4i+8m(j-1),\\
wt_{f} (b_{i}^{j}) =& f(v_{i-1}b_{i}^{j})+f(b_{i}^{j}v_i) =  7+4i+8m(j-1).
\end{align*}
Thus the weights of vertices of degree 2 are the following odd numbers $11, 15, \dots, 16m+7$. More precisely, all of them are congruent 3 modulo 4.  
The weights of vertices $v_m$ and $u_m$ are divisible by 4, as
\begin{align*}
wt_{f} (v_m) =&   f(a_{m}^{1} v_m) + f(a_{m}^{2} v_m) =12m+8,\\
wt_{f} (u_m) =&   f(b_{m}^{1} u_m)+f(b_{m}^{2} u_m) =   8m+8.
\end{align*}

Now, we evaluate the weights of vertices of degree 4. For $ i = 1,2, \dots, m-1$, we get
\begin{align*}
wt_{f} (v_i) =& \sum_{j=1}^{2} f(a_{i}^{j} v_i) + \sum_{j=1}^{2} f( a_{i+1}^{j}v_i)  =  16m+18+8i,\\
wt_{f} (u_i) =& \sum_{j=1}^{2} f(b_{i}^{j} u_i) + \sum_{j=1}^{2} f( b_{i+1}^{j} u_i)  = 8m+18+8i.
\end{align*}
Which means that they are distinct numbers and all of them are congruent 2 modulo 4.
Finally, 
\begin{align*}
wt_{f} (v_0) =& \sum_{j=1}^{2} f(a_{0}^{j} v_0) + \sum_{j=1}^{2} f(a_{1}^{j} v_0)  = 8m+17,\\
wt_{f} (u_0) =& \sum_{j=1}^{2} f(b_{0}^{j} u_0) + \sum_{j=1}^{2} f(b_{1}^{j} u_0)  = 4m+13.
\end{align*}
Thus they are congruent 1 modulo 4. 
Evidently, all vertex weight induced by the labeling $f$ are distinct numbers thus $f$ is an antimagic labeling of $K_{1,2} \times P_{2m+2}$.
\end{proof}

Now consider the case when $s\ge 3$.
\begin{theorem}\label{even, s>2}   
The graph $K_{1,s}\times P_{2m+2}$ is antimagic for $s\ge 3$, $m\ge 1$.
\end{theorem}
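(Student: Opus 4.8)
The plan is to exhibit an explicit bijective edge labeling $f\colon E(K_{1,s}\times P_{2m+2})\to\{1,2,\dots,(4m+2)s\}$ and to check that the induced vertex weights are pairwise distinct. Recall that this graph has two isomorphic components, one on $\{v_i\}\cup\{a_i^j\}$ and one on $\{u_i\}\cup\{b_i^j\}$, and that in each component there are exactly four vertex degrees: $1$ (the $s$ pendants $a_0^j$, resp.\ $b_0^j$), $2$ (the vertices $a_i^j$, resp.\ $b_i^j$, for $1\le i\le m$), $s$ (the single vertex $v_m$, resp.\ $u_m$), and $2s$ (the backbone vertices $v_0,\dots,v_{m-1}$, resp.\ $u_0,\dots,u_{m-1}$). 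Since $s\ge 3$, the numbers $1,2,s,2s$ are pairwise distinct, so it suffices to prove that the weights are distinct \emph{within} each of these four degree classes and that the four classes occupy disjoint sets of values. This is precisely where the hypothesis $s\ge 3$ enters, and it explains why the case $s=2$ (degree $s$ equal to degree $2$) had to be treated separately.

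The degree-$1$ vertices are disposed of first: the plan is to assign the $2s$ smallest labels $1,\dots,2s$ to the $2s$ pendant edges, say $f(b_0^ju_0)=j$ and $f(a_0^jv_0)=s+j$ for $j=1,\dots,s$, matching the pattern already used in Lemma~\ref{even, s=2}. Their weights are then exactly $1,\dots,2s$, hence distinct, and since every other vertex is incident with at least two edges, all of which (for $v_1,\dots,v_m$ and all $u_i$ and all degree-$2$ vertices) receive labels $\ge 2s+1$, and $v_0$ has weight a sum of $2s\ge 6$ distinct positive integers, the degree-$1$ class lies strictly below all the others. The remaining $4ms$ labels $\{2s+1,\dots,(4m+2)s\}$ are then split into $4m$ blocks of $s$ consecutive integers, one block for each of the four ``sides'' $\{a_i^jv_{i-1}\}_j$, $\{a_i^jv_i\}_j$, $\{b_i^ju_{i-1}\}_j$, $\{b_i^ju_i\}_j$ with $1\le i\le m$. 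Inside each block the $s$ labels are handed to $j=1,\dots,s$ in a prescribed (increasing or decreasing) order, chosen so that the two labels meeting at a degree-$2$ vertex $a_i^j$ (resp.\ $b_i^j$) sum to a value strictly monotone in the pair $(i,j)$; this makes all $2ms$ degree-$2$ weights distinct. The order of the $4m$ blocks along the label range, together with the within-block orderings, is taken in an alternating pattern between the two components and with a parity shift, so that, modulo a suitable integer (as in the mod-$4$ argument of Lemma~\ref{even, s=2}, now with modulus depending on $s$ and $m$), the degree-$2$ weights, the degree-$2s$ weights and the two degree-$s$ weights fall into distinct residue classes and hence cannot collide across classes.

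The delicate point, and the expected main obstacle, is the degree-$2s$ class. Each interior backbone vertex $v_i$ with $1\le i\le m-1$ is incident with the $s$ edges $a_i^jv_i$ (the ``right side'' of rung $i$) and the $s$ edges $a_{i+1}^jv_i$ (the ``left side'' of rung $i+1$), so
\[
wt_f(v_i)=\sum_{j=1}^{s} f(a_i^jv_i)+\sum_{j=1}^{s} f(a_{i+1}^jv_i),
\]
which couples the offsets of two consecutive rungs; moreover the boundary vertices $v_0$ (pendants plus rung $1$) and $v_m$ (rung $m$ only, and of degree $s$) must be treated apart. One must therefore choose the block offsets so that $wt_f(v_0)<wt_f(v_1)<\cdots<wt_f(v_{m-1})$ strictly, so that the analogous $u$-chain is strictly monotone, so that the $v$-chain and the $u$-chain do not interleave into an equality, and so that none of these values equals a degree-$2$ weight or one of $wt_f(v_m),wt_f(u_m)$. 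Arranging all of this at once is what forces the interleaving of the two components and the residue bookkeeping. Once suitable offsets are written down, what remains is the routine but lengthy verification that the displayed formulas define a bijection onto $\{1,\dots,(4m+2)s\}$ and that each claimed monotonicity and congruence holds for every $s\ge 3$ and $m\ge 1$; together with Lemmas~\ref{P2} and \ref{even, s=2} and the remark on $s=1$, this settles Theorem~\ref{main} for $K_{1,s}\times P_n$ with $n$ even.
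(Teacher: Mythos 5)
Your outline correctly identifies the structure of the graph (degree classes $1,2,s,2s$ in each component), the role of $s\ge 3$, and the right general strategy (pendant edges get $1,\dots,2s$; the remaining labels are distributed in blocks of $s$ consecutive integers per ``side'' of each rung; separation of classes by congruences plus monotonicity along the backbone). This is indeed the shape of the paper's argument. But as written it is a plan, not a proof: the entire content of the theorem is the existence of concrete block offsets for which all the claimed monotonicities, congruences and non-collisions hold simultaneously, and you defer exactly this (``once suitable offsets are written down, what remains is the routine but lengthy verification''). Nothing in the proposal shows that such offsets exist; in the paper this is done by explicit formulas (labels of the form $2j-1+2is$ and $2j+2is$, alternating with the parity of $i$, shifted by $2ms$ in the second component), from which one gets degree-$2$ weights $4j+4is-1\equiv 3\pmod 4$, even interior backbone weights, and then a nontrivial case analysis on the parities of $m$ and $s$ to compare $wt(v_m)$ and $wt(u_m)$ with the other classes.

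There is also a concrete flaw in the one choice you do fix. By prescribing $f(b_0^ju_0)=j$ and $f(a_0^jv_0)=s+j$ you freeze the split of the pendant labels between the two components, whereas the paper deliberately leaves this split as a free parameter $\varepsilon$ (the value of $\sum_j f(a_0^jv_0)-s(s+1)/2\in\{0,1,\dots,s^2\}$). This freedom is not cosmetic: the boundary weights $wt(v_0)$ and $wt(u_0)$ cannot be forced into a residue class disjoint from the degree-$2$ weights and from the $wt(v_i)$ chain uniformly in $s$ and $m$, so the paper chooses $\varepsilon^*\in\{0,1,2,3\}$ at the end, using that consecutive degree-$2$ weights differ by $4$ while consecutive backbone weights differ by at least $4s^2-2s$, to steer $wt(v_0)$ and $wt(u_0)$ away from the remaining values. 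Your ``residue bookkeeping modulo a suitable integer'' cannot by itself dispose of $v_0$, $u_0$, $v_m$, $u_m$ in all parity cases, and with the pendant split fixed you have no adjustable parameter left to fall back on. To turn the proposal into a proof you would need to write down the offsets explicitly, keep (or re-introduce) an adjustable quantity playing the role of $\varepsilon$, and carry out the boundary-vertex comparisons case by case.
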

\begin{proof}
Let us define an edge labeling $f_{\varepsilon}$, $\varepsilon\in\{0, 1,\dots, s^2\}$, of $K_{1,s}\times P_{2m+2}$, $s\ge 3$, $m\ge 1$, such that the pendant edges are labeled as follows:
\begin{align}\label{stupen1}
\{f_{\varepsilon}(a_0^jv_0), f_{\varepsilon}(b_0^ju_0): j=1, 2, \dots, s\}=&\{1, 2, \dots, 2s\}
\end{align}
and 
\begin{align}\label{prispevok k v0}
\sum_{j=1}^{s}f_{\varepsilon}(a_0^jv_0)=\frac{s(s+1)}{2}+\varepsilon,
\end{align}
thus 
\begin{align}\label{prispevok k u0}
\sum_{j=1}^{s}f_{\varepsilon}(b_0^ju_0)=s^2+\frac{s(s+1)}{2}-\varepsilon.\end{align}
 The labels of the remaining edges are
\begin{align*}
 f_{\varepsilon}(v_{i-1}a_i^j) =&
        \begin{cases} 
				     2j-1+2is, & \text{for $i\equiv 1\pmod 2$, $1\le i\le m$ and $j=1, 2, \dots, s$},\\
						 2j+2is, & \text{for $i\equiv 0\pmod 2$, $2\le i\le m$ and $j=1, 2, \dots, s$},
\end{cases}\\
 f_{\varepsilon}(a_i^jv_{i}) =&
        \begin{cases} 
				     2j+2is, & \text{for $i\equiv 1\pmod 2$, $1\le i\le m$ and $j=1, 2, \dots, s$},\\
						 2j-1+2is, & \text{for $i\equiv 0\pmod 2$, $2\le i\le m$ and $j=1, 2, \dots, s$},
\end{cases}\\
 f_{\varepsilon}(u_{i-1}b_i^j) =&
        \begin{cases} 
				     2j-1+2is+2ms, & \text{for $i\equiv 1\pmod 2$, $1\le i\le m$ and $j=1, 2, \dots, s$},\\
						 2j+2is+2ms, & \text{for $i\equiv 0\pmod 2$, $2\le i\le m$ and $j=1, 2, \dots, s$},
\end{cases}\\
 f_{\varepsilon}(b_i^ju_{i}) =&
        \begin{cases} 
				     2j+2is+2ms, & \text{for $i\equiv 1\pmod 2$, $1\le i\le m$ and $j=1, 2, \dots, s$},\\
						 2j-1+2is+2ms, & \text{for $i\equiv 0\pmod 2$, $2\le i\le m$ and $j=1, 2, \dots, s$}.
\end{cases}
\end{align*}
Evidently, the edges are labeled with distinct numbers from $1$  to $4ms+2s$.  

Now we evaluate the induced vertex labels under the labeling $f_{\varepsilon}$.
The weights of vertices of degree one are
\begin{align*}
wt_{f_{\varepsilon}}(a_0^j)=& f_{\varepsilon}(a_0^j v_0),\\
wt_{f_{\varepsilon}}(b_0^j)=& f_{\varepsilon}(b_0^ju_0)
\end{align*}
for $j=1, 2, \dots, s$. According to (\ref{stupen1}) we get that they are distinct numbers from $1$ to $2s$.

For the weights of   vertices of $a_i^j$,  $i=1, 2, \dots, m$, $j=1, 2, \dots, s$, we get
\begin{align*}
wt_{f_{\varepsilon}}(a_i^j)=& f_{\varepsilon}(v_{i-1}a_i^j)+f_{\varepsilon}(a_i^jv_{i})= 
				     4j+4is-1
						\end{align*}
thus they are numbers from the set $\{4s+3, 4s+7, \dots, 4ms+4s-1\}$.

The weights of   vertices of $b_i^j$,  $i=1, 2, \dots, m$, $j=1, 2, \dots, s$ are
						\begin{align*}						
						wt_{f_{\varepsilon}}(b_i^j)=& f_{\varepsilon}(u_{i-1}b_i^j)+f_{\varepsilon}(b_i^ju_{i})= 
				    4ms+ 4j+4is-1,  
\end{align*}
i.e., they form the set $\{4ms+4s+3, 4ms+4s+7, \dots, 8ms+4s-1\}$.
Thus the weights of vertices of degree two are all distinct numbers of the form $4s+4k+3$, $k=1, 2, \dots, 2ms$, thus they are all odd, moreover all are congruent 3 modulo 4.  \\
Now we check the weights of vertices of degree $2s$. We get
\begin{align*}
wt_{f_{\varepsilon}}(v_m)=& \sum_{j=1}^{s} f_{\varepsilon}(a_m^jv_m)=
     \begin{cases}
		            2ms^2+s^2+s , & \text{when $m$ is  odd},\\
								2ms^2+s^2 , & \text{when $m$ is  even},
			\end{cases}					\\
wt_{f_{\varepsilon}}(u_m)=& \sum_{j=1}^{s} f_{\varepsilon}(b_m^ju_m)=
     \begin{cases}
		            4ms^2+s^2+s , & \text{when $m$ is  odd},\\
								4ms^2+s^2 , & \text{when $m$ is  even}.
			\end{cases}					
\end{align*}
Now we evaluate the weights of vertices of degree $4s$. For $v_i$, $u_i$, $i=1, 2, \dots, m-1$ we get
\begin{align*}
wt_{f_{\varepsilon}}(v_i)=& \sum_{j=1}^{s} f_{\varepsilon}(a_i^jv_i)+\sum_{j=1}^{s} f_{\varepsilon}(v_ia_{i+1}^j)=
     \begin{cases}
		            4s^2+4is^2+2s ,  & \text{when $i$ is  odd, $1\le i\le m-1$},\\
								4s^2+4is^2 ,  & \text{when $i$ is  even, $2\le i\le m-1$},
			\end{cases}					\\
wt_{f_{\varepsilon}}(u_i)=& \sum_{j=1}^{s} f_{\varepsilon}(b_i^ju_i)+\sum_{j=1}^{s} f_{\varepsilon}(u_ib_{i+1}^j)=
     \begin{cases}
		          4ms^2+  4s^2+4is^2+2s , \\ \hspace*{20mm} \text{when $i$ is  odd, $1\le i\le m-1$},\\
							4ms^2+	4s^2+4is^2 , \\ \hspace*{20mm} \text{when $i$ is  even, $2\le i\le m-1$},
			\end{cases}		
\end{align*}
thus all these weights are even numbers. Finally, according to (\ref{prispevok k v0}) and (\ref{prispevok k u0}) we get
\begin{align*}
wt_{f_{\varepsilon}}(v_0)=& \sum_{j=1}^{s} f_{\varepsilon}(a_0^jv_0)+\sum_{j=1}^{s} f_{\varepsilon}(v_0a_{1}^j)=3s^2+\frac{s(s+1)}2+\varepsilon,\\
wt_{f_{\varepsilon}}(u_0)=& \sum_{j=1}^{s} f_{\varepsilon}(b_0^ju_0)+\sum_{j=1}^{s} f_{\varepsilon}(u_0b_{1}^j)=4s^2+\frac{s(s+1)}2+2ms^2-\varepsilon.
\end{align*}

Note that only the weights of vertices $u_0$, $v_0$  and $a_0^j$, $b_0^j$, $j=1, 2, \dots, s$,  depend on the value of $\varepsilon$.

Evidently, the weights of vertices of degree 1 are distinct and they are different (smaller) from all the other vertex weights.

Moreover, as the weights of the vertices of degree $2$ are odd, they are different from the weights of the vertices $v_i$, $u_i$, $i=1, 2, \dots, m-1$, as they are all even numbers. 

It is easy to see that
\begin{align*}
 wt_{f_{\varepsilon}}(v_0)&<  wt_{f_{\varepsilon}}(v_1)< wt_{f_{\varepsilon}}(v_2)< \dots< wt_{f_{\varepsilon}}(v_{m-1}) < wt_{f_{\varepsilon}}(u_1) < wt_{f_{\varepsilon}}(u_2) <\dots   < wt_{f_{\varepsilon}}(u_{m-1}),
\\
  wt_{f_{\varepsilon}}(v_m) &<  wt_{f_{\varepsilon}}(u_0) < wt_{f_{\varepsilon}}(u_1),\\
  wt_{f_{\varepsilon}}(v_0)&<   wt_{f_{\varepsilon}}(v_m) < wt_{f_{\varepsilon}}(u_m).
\end{align*}
Now we prove that also the other vertex weights are distinct. To prove it we have to show the following.
\begin{itemize}
 
\item  $wt_{f_{\varepsilon}}(u_m)\ne wt_{f_{\varepsilon}}(u_i)$ for every $i=1, 2, \dots, m-1$.\\

This follows from the fact that $$wt_{f_{\varepsilon}}(u_m)\le 4ms^2+s^2+s< 4ms^2+8s^2+2s=wt_{f_{\varepsilon}}(u_1)<wt_{f_{\varepsilon}}(u_2)<\dots <wt_{f_{\varepsilon}}(u_{m-1}).$$

\item  $wt_{f_{\varepsilon}}(u_m)\ne wt_{f_{\varepsilon}}(v_i)$ for every $i=1, 2, \dots, m-1$.\\
By contradiction. Consider that for some $t\in\{1, 2, \dots, m-1\}$ holds the equality  $wt_{f_{\varepsilon}}(u_m)= wt_{f_{\varepsilon}}(v_t)$. 
  We distinguish two subcases.

When $m$ is odd then 
\begin{align*}
4ms^2+s^2+s=wt_{f_{\varepsilon}}(u_m)=& wt_{f_{\varepsilon}}(v_t)=\begin{cases}
																																	  4s^2+4ts^2+2s , & \text{when $t$ is  odd},\\
																																		4s^2+4ts^2 , & \text{when $t$ is  even},
																																	\end{cases}\\
s(4m-4t-3)=&			\begin{cases}
																																	 1 , & \text{when $t$ is  odd},\\
																																	-1, & \text{when $t$ is  even}.
																																	\end{cases}																														
\end{align*}
However, this is not possible when $s\ge 2$.

When $m$ is even then 
\begin{align*}
4ms^2+s^2=wt_{f_{\varepsilon}}(u_m)=& wt_{f_{\varepsilon}}(v_t)=\begin{cases}
																																	  4s^2+4ts^2+2s , & \text{when $t$ is  odd},\\
																																		4s^2+4ts^2 , & \text{when $t$ is  even},
																																	\end{cases}\\
s(4m-4t-3)=&			\begin{cases}
																																	 2 , & \text{when $t$ is  odd},\\
																																	0, & \text{when $t$ is  even}.
																																	\end{cases}																														
\end{align*}
This is not possible when $s\ge 3$.

\item  $wt_{f_{\varepsilon}}(u_m)$ is distinct from the weights of vertices of degree 2.

When $m$ is odd then $wt_{f_{\varepsilon}}(u_m)=4ms^2+s(s+1)$ is even and thus it is different from the weights of  vertices of degree 2 as these weights are odd.

When $m$  and $s$ are both  even then $wt_{f_{\varepsilon}}(u_m)=4ms^2+s^2$ is even. Thus it is  different from the weights of  vertices of degree 2.

When $m$ is even and $s$ is odd then $wt_{f_{\varepsilon}}(u_m)=4ms^2+s^2\equiv 1\pmod 4$. As the weights of the vertices of degree 2 are congruent 3 modulo 4 we have that are distinct.

\item  $wt_{f_{\varepsilon}}(v_m)\ne wt_{f_{\varepsilon}}(v_i)$ for every $i=1, 2, \dots, m-1$.
 
By contradiction. Consider that for some $t\in\{1, 2, \dots, m-1\}$ holds the equality  $wt_{f_{\varepsilon}}(v_m)= wt_{f_{\varepsilon}}(v_t)$. 
  We distinguish two subcases.

When $m$ is odd then 
\begin{align*}
2ms^2+s^2+s=wt_{f_{\varepsilon}}(v_m)=& wt_{f_{\varepsilon}}(v_t)=\begin{cases}
																																	  4s^2+4ts^2+2s , & \text{when $t$ is  odd},\\
																																		4s^2+4ts^2 , & \text{when $t$ is  even},
																																	\end{cases}\\
s(2m-4t-3)=&			\begin{cases}
																																	 1 , & \text{when $t$ is  odd},\\
																																	-1, & \text{when $t$ is  even}.
																																	\end{cases}																														
\end{align*}
However, this is not possible when $s\ge 2$.

When $m$ is even then 
\begin{align*}
2ms^2+s^2=wt_{f_{\varepsilon}}(v_m)=& wt_{f_{\varepsilon}}(v_t)=\begin{cases}
																																	  4s^2+4ts^2+2s , & \text{when $t$ is  odd},\\
																																		4s^2+4ts^2 , & \text{when $t$ is  even},
																																	\end{cases}\\
s(2m-4t-3)=&			\begin{cases}
																																	 2 , & \text{when $t$ is  odd},\\
																																	0, & \text{when $t$ is  even}.
																																	\end{cases}																														
\end{align*}
This is not possible when $s\ge 3$.

\item  $wt_{f_{\varepsilon}}(v_m)$ is distinct from the weights of vertices of degree 2.

When $m$ is odd then $wt_{f_{\varepsilon}}(v_m)=2ms^2+s(s+1)$ is even and thus it is different from the weights of  vertices of degree 2 as these weights are odd.

When $m$  and $s$ are both  even then $wt_{f_{\varepsilon}}(v_m)=2ms^2+s^2$ is even. Thus it is  different from the weights of  vertices of degree 2.

When $m$ is even and $s$ is odd then $wt_{f_{\varepsilon}}(v_m)=2ms^2+s^2\equiv 1\pmod 4$. As the weights of vertices are congruent 3 modulo 4 we have that they are distinct.

\item
Now we prove that for at least one integer $\varepsilon^*$  from the set $ \{0, 1, 2, 3\}$ under the labeling $f_{\varepsilon^*}$ the weight of the vertex $v_0$ is different from the 
 weights of vertices of degree 2, the weight of the vertex $u_0$ is different from the 
 weights of vertices of degree 2 and also $wt_{f_{\varepsilon^*}}(u_0)\ne wt_{f_{\varepsilon^*}}(v_i)$ for every $i=1, 2, \dots, m-1$.

But this follows from the fact that the 
difference 
between two  weights of vertices of degree 2 is   four and the difference between the weights of vertices $v_i$,  $i=1, 2, \dots, m-1$, is at least $4s^2-2s$. 
\end{itemize}

This concludes the proof.
\end{proof}

Figure \ref{Figure 1} illustrates an  antimagic labeling of $K_{1,3}\times P_{8}$.
\begin{figure}[ht!]
\begin{center}
\includegraphics{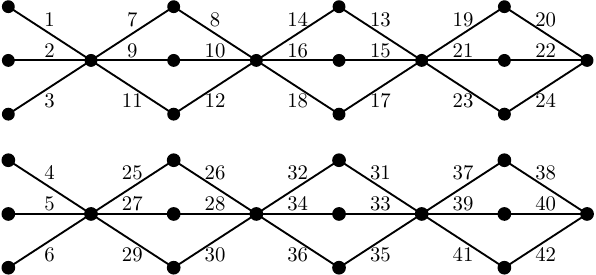} 
\caption{An antimagic labeling of  of $K_{1,3}\times P_{8}$.}
\label{Figure 1}
\end{center}
\end{figure}

Combing Lemmas \ref{P2}, \ref{even, s=2} and Theorem \ref{even, s>2}   we get the following result for the direct product of a star with a path on  even number of vertices.

\begin{theorem}\label{even}   
The graph $K_{1,s}\times P_{2m+2}$ is antimagic for $s\ge 2$, $m\ge 0$ except the case when $(s,n)=(2,2)$.
\end{theorem}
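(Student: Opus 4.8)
The plan is simply to partition the parameter region $\{s\ge 2,\ m\ge 0\}$ into the three pieces for which an antimagic labeling has already been produced, and to verify that these pieces cover everything except the single excluded pair. First I would handle $m=0$, i.e.\ the graph $K_{1,s}\times P_2$. By Lemma \ref{P2} this graph is antimagic for every $s\ge 3$. For $s=2$ one has, via Sampathkumar's identity $G\times K_2\cong 2G$ for bipartite $G$, that $K_{1,2}\times P_2\cong 2K_{1,2}\cong 2P_3$, which is known not to be antimagic; this is precisely the case $(s,n)=(2,2)$ that the statement excludes, so no labeling is required there.

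Next I would take $m\ge 1$. If $s=2$, Lemma \ref{even, s=2} provides an antimagic labeling of $K_{1,2}\times P_{2m+2}$. If $s\ge 3$, Theorem \ref{even, s>2} provides an antimagic labeling of $K_{1,s}\times P_{2m+2}$. Since the three sub-regions $\{m=0,\ s\ge 3\}$, $\{m\ge 1,\ s=2\}$ and $\{m\ge 1,\ s\ge 3\}$ together exhaust $\{s\ge 2,\ m\ge 0\}$ apart from $(s,m)=(2,0)$, i.e.\ $(s,n)=(2,2)$, the theorem follows.

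There is no genuine obstacle here: all the analytic work — constructing the explicit edge labelings and checking that the induced vertex weights are pairwise distinct — already lives in the three cited results. The only point that needs care is the bookkeeping of the parameter ranges, so that the rectangle $\{s\ge2,\ m\ge0\}$ is tiled with neither gap nor ambiguity, together with recording why the lone exceptional pair really is an exception, namely that $K_{1,2}\times P_2\cong 2P_3$ is provably not antimagic.
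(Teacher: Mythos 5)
Your proposal is correct and matches the paper's argument exactly: Theorem \ref{even} is obtained by combining Lemma \ref{P2} (case $m=0$, $s\ge 3$), Lemma \ref{even, s=2} (case $s=2$, $m\ge 1$) and Theorem \ref{even, s>2} (case $s\ge 3$, $m\ge 1$), with $(s,n)=(2,2)$ excluded since $K_{1,2}\times P_2\cong 2P_3$ is not antimagic. The parameter bookkeeping you describe is all that the paper's proof consists of.
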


\section{ A path  on odd number of vertices}
In this section we consider the direct product of a star with a path on odd number of vertices. Also in this case the graph $K_{1,s} \times P_{2m+1}$ consist of two connected components however, these components are not isomorphic.
Let us denote the vertices and edges of $K_{1,s} \times P_{2m+1}$, $s\ge 2$, $m\ge 1$,  in the following way
\begin{align*}
V(K_{1,s} \times P_{2m+1}) = &	\left\{a_{i}^{j},u_i : i=0,1,\dots,m, j = 1,2, \dots, s\right\}\\&\cup \left\{b_{i}^{j}  : i=1,2,\dots,m, j = 1,2, \dots, s\right\} \cup \left\{v_{i}: i=0,1,\dots, m-1\right\},\\
E(K_{1,s} \times P_{2m+1}) =& \left\{a_{i}^{j}v_i, a_{i+1}^{j}v_{i}: i=0,1,\dots,m-1, j=1,2,\dots,s\right\}
 \\ &\cup \left\{b_{i}^{j}u_i,b_{i}^{j}u_{i-1} : i=1,2, \dots, m, j = 1,2, \dots, s\right\}. 
\end{align*}

First we solve the small cases $K_{1,s} \times P_{3}$ and $K_{1,s} \times P_{5}$.
\begin{lemma}\label{odd, P3}
The graph $K_{1,s} \times P_{3}$ is antimagic for $s\geq 2$.
\end{lemma}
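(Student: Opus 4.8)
The plan is to first observe that, with the vertex and edge notation introduced above for $K_{1,s}\times P_{2m+1}$ specialized to $m=1$, the edge set of $K_{1,s}\times P_3$ reduces to $\{a_0^jv_0,\,a_1^jv_0:1\le j\le s\}\cup\{b_1^ju_0,\,b_1^ju_1:1\le j\le s\}$, so the graph is the disjoint union of the star $K_{1,2s}$ centered at $v_0$ with pendant vertices $a_0^j,a_1^j$ and the complete bipartite graph $K_{2,s}$ with parts $\{u_0,u_1\}$ and $\{b_1^j:1\le j\le s\}$. It has $4s$ edges, so an antimagic labeling is a bijection $f\colon E\to\{1,\dots,4s\}$. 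The crucial design choice is to give the $2s$ smallest labels to the star and the $2s$ largest to the $K_{2,s}$-part: then the pendant weights are automatically $1,2,\dots,2s$, hence mutually distinct and strictly smaller than every other weight, and it remains only to separate the $s+3$ weights $wt(v_0)$, $wt(u_0)$, $wt(u_1)$, $wt(b_1^1),\dots,wt(b_1^s)$ from one another.

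Concretely I would take $f(a_0^jv_0)=j$, $f(a_1^jv_0)=s+j$, $f(b_1^ju_0)=2s+2j-1$, $f(b_1^ju_1)=2s+2j$ for $j=1,\dots,s$, which is plainly a bijection onto $\{1,\dots,4s\}$. A direct computation gives $wt(b_1^j)=4s+4j-1$, so these $s$ values form an arithmetic progression with common difference $4$ (hence pairwise distinct) and are all congruent to $3$ modulo $4$; and $wt(v_0)=s(2s+1)$, $wt(u_0)=3s^2$, $wt(u_1)=s(3s+1)$. Since $s$ and $3s+1$ have opposite parity, $wt(u_1)$ is even, so it differs from every $wt(b_1^j)$; and $wt(v_0)<wt(u_0)<wt(u_1)$ for $s\ge2$, so these three are pairwise distinct, and all three exceed the pendant weights.

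The main obstacle — the only genuinely delicate point — is to exclude the coincidences $wt(v_0)=wt(b_1^j)$ and $wt(u_0)=wt(b_1^j)$, since the values $2s^2+s$ and $3s^2$ do lie in the interval $[4s+3,\,8s-1]$ spanned by the $b_1^j$-weights when $s$ is small. I would handle this by a size argument once $s$ is large: $3s^2>8s-1$ for $s\ge3$ and $2s^2+s>8s-1$ for $s\ge4$, so for $s\ge4$ one has $\max_j wt(b_1^j)<wt(v_0)<wt(u_0)<wt(u_1)$ and all weights are distinct, and the two leftover values $s=2,3$ are then verified by inspecting their short weight lists directly. Alternatively, a divisibility argument removes even these cases: $2s^2+s\equiv3\pmod4$ forces $s\equiv1\pmod4$, and then the only index solving $2s^2+s=4s+4j-1$ is $j=(2s-1)(s-1)/4>s$ for every $s\ge2$; likewise $3s^2\equiv3\pmod4$ forces $s$ odd, and then $3s^2=4s+4j-1$ would require $j=(3s-1)(s-1)/4>s$, so no such coincidence ever occurs. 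In either presentation all $3(s+1)$ vertex weights are pairwise distinct, so $f$ is an antimagic labeling of $K_{1,s}\times P_3$.
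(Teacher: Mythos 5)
Your labeling is exactly the paper's (up to swapping the two symmetric vertices $u_0$ and $u_1$ of the $K_{2,s}$-component), your weight computations agree with it, and the proof is correct; the paper simply asserts that the weights are clearly distinct for $s\ge 2$, whereas you verify the possible coincidences explicitly. One tiny slip: the inequality $(2s-1)(s-1)/4>s$ is false for $s\in\{2,3\}$, but this is harmless because in those cases that quantity is not an integer (and the congruence condition fails), and your size argument for $s\ge 4$ together with direct inspection of $s=2,3$ already closes the gap.
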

\begin{proof}
Let us define an edge labeling $f$ of $K_{1,s} \times P_{3}$ such that:
\begin{align*}
f(a_{0}^{j} v_0)=&j, &&\hspace*{-0cm}\text{for $j=1, 2, \dots, s$},\\
f(v_0 a_{1}^{j})=&s+j, &&\hspace*{-0cm}\text{for $j=1, 2, \dots, s$},\\
f(u_1 b_{1}^{j})=& 2s+2j-1, &&\hspace*{-0cm}\text{for $j=1, 2, \dots, s$},\\
f(u_0 b_{1}^{j})=& 2s+2j, &&\hspace*{-0cm}\text{for $j=1, 2, \dots, s$}.
\end{align*}
The induced vertex weights are
\begin{align*}
wt_{f} (a_0^j)=& f (a_0^j v_0)=j,&&\hspace*{-0cm}\text{for $j=1, 2, \dots, s$},\\
wt_{f} (a_1^j)=& f(v_{0}a_{1}^{j})=s+j,&&\hspace*{-0cm}\text{for $j=1, 2, \dots, s$},\\
wt_{f}(b_{1}^{j})=& \sum_{j=1}^{s} f (u_{0} b_{1}^{j}) +  \sum_{j=1}^{s} f (b_{1}^{j} u_{1}) = 4s+4j-1,  &&\hspace*{-0cm}\text{for $j=1, 2, \dots, s$},\\
wt_{f}(v_{0})=& \sum_{j=1}^{s} f (a_{0}^{j} v_{0}) +  \sum_{j=1}^{s} f (a_{1}^{j} v_{0}) = 2s^2+s,\\
wt_{f}(u_{1}) =& \sum_{j=1}^{s} f (u_{1} b_{1}^{j} ) = 3s^{2},\\
wt_{f}(u_{0}) =& \sum_{j=1}^{s} f (u_{0} b_{1}^{j} ) = 3s^{2}+s.
\end{align*}
Clearly, for $s\geq 2$ all the  weights are distinct.
\end{proof}

\begin{lemma}\label{odd, P5}
The graph $K_{1,s} \times P_{5}$ is antimagic for $s\geq 2$.
\end{lemma}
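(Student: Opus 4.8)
The plan is to exhibit an explicit antimagic labeling, following the same scheme as Lemma~\ref{odd, P3} and Theorem~\ref{even, s>2}. Setting $m=2$ in the notation introduced above, $K_{1,s}\times P_5$ has two components: the ``$a$--$v$'' component is bipartite with parts $\{v_0,v_1\}$ and $\{a_i^j\}$, where $a_0^j,a_2^j$ are pendant, each $a_1^j$ has degree $2$, and $\deg v_0=\deg v_1=2s$; the ``$b$--$u$'' component is bipartite with parts $\{u_0,u_1,u_2\}$ and $\{b_i^j\}$, where $b_1^j,b_2^j$ have degree $2$, $\deg u_0=\deg u_2=s$ and $\deg u_1=2s$. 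Altogether there are $8s$ edges, $2s$ pendant vertices, $3s$ vertices of degree $2$ coming from the $a_1^j,b_1^j,b_2^j$ (and, when $s=2$ only, the two extra degree-$2$ vertices $u_0,u_2$), and five ``hub'' vertices $v_0,v_1,u_0,u_1,u_2$.

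First I would put the $2s$ smallest labels on the pendant edges, $f(a_0^jv_0)=j$ and $f(a_2^jv_1)=s+j$ for $j=1,\dots,s$, so the pendant weights are exactly $1,2,\dots,2s$ and are the smallest weights in the graph. The remaining $6s$ labels $\{2s+1,\dots,8s\}$ split into $3s$ consecutive pairs $\{2s+2t-1,\,2s+2t\}$, $t=1,\dots,3s$, each with one odd and one even member. I assign pair $t=j$ to the two edges at $a_1^j$, pair $t=s+j$ to the two edges at $b_1^j$, and pair $t=2s+j$ to the two edges at $b_2^j$ (for $j=1,\dots,s$); the weight of each such degree-$2$ vertex is then the pair sum $4s+4t-1$, so the $3s$ degree-$2$ weights are $4s+3,\,4s+7,\,\dots,\,16s-1$ — an arithmetic progression of numbers all congruent to $3\pmod 4$, pairwise distinct and all larger than the pendant weights. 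Inside each pair I keep, as a free choice, which endpoint edge receives the odd label; summing over $j$ this leaves three mutually independent parameters $\delta_1,\delta_2,\delta_3$, each ranging over $\{0,1,\dots,s\}$, which respectively tune the total label received by $v_0$, by $u_0$, and by $u_2$ — exactly the role played by $\varepsilon$ in Theorem~\ref{even, s>2}.

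A short computation then gives
\[
wt_f(v_0)=3s^2+\tfrac{s(s+1)}{2}+\delta_1,\qquad
wt_f(v_1)=4s^2+s+\tfrac{s(s+1)}{2}-\delta_1,
\]
\[
wt_f(u_0)=5s^2+\delta_2,\qquad
wt_f(u_2)=7s^2+\delta_3,\qquad
wt_f(u_1)=12s^2+2s-\delta_2-\delta_3 .
\]
These five hub weights have orders $\tfrac72 s^2,\ \tfrac92 s^2,\ 5s^2,\ 7s^2,\ 12s^2$ up to bounded terms, and all the gaps between consecutive ones are positive multiples of $s^2$ except $wt_f(u_0)-wt_f(v_1)=\tfrac{s^2-3s}{2}+\delta_1+\delta_2$, which is automatically positive for $s\ge4$ and is made nonzero for $s\in\{2,3\}$ by an appropriate choice of $\delta_1+\delta_2$. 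Finally, since each hub weight is ``quadratic $+$ a $\delta$-parameter'' and each $\delta_i$ runs over $s+1\ge3$ consecutive values, each $\delta_i$ can be chosen so that the corresponding hub weight avoids the residue class $3\pmod 4$, hence avoids every degree-$2$ weight; for the small values of $s$ where the quadratic hub ranges still overlap the interval $[4s+3,16s-1]$ I would simply record the finitely many admissible triples $(\delta_1,\delta_2,\delta_3)$ explicitly, and for $s=2$ additionally check that $wt_f(u_0),wt_f(u_2)$ miss the $b$-vertex weights. Once the pendant weights, the degree-$2$ weights and the five hub weights are seen to be pairwise distinct, $f$ is an antimagic labeling.

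The main obstacle is precisely this endgame for small $s$: for $s$ larger than about $4$ the quadratic hub weights sit far above the linear range occupied by the pendant and degree-$2$ weights and are automatically well-separated from each other, so the verification is immediate; but for $s\in\{2,3,4\}$ those ranges overlap and one must spend the three shift parameters $\delta_1,\delta_2,\delta_3$ to dodge the class $3\pmod4$ and to rule out the near-collision $wt_f(v_1)=wt_f(u_0)$. The case $s=2$ is the most delicate, because there the two vertices $u_0,u_2$ drop to degree $2$ and their weights must be fitted among the remaining degree-$2$ weights; a single explicit choice such as $(\delta_1,\delta_2,\delta_3)=(1,1,0)$ settles it.
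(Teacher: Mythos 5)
Your construction is the paper's labeling with extra freedom built in: like the paper, you put $1,\dots,2s$ on the pendant edges at $a_0^j,a_2^j$ and the consecutive pair $\{2s+2t-1,2s+2t\}$ on the two edges of each degree-two vertex, in the same order ($a_1^j$, then $b_1^j$, then $b_2^j$); the paper fixes one orientation inside every pair (the choice corresponding to $\delta_1=\delta_2=\delta_3=s$, giving $wt(v_0)=\tfrac{7s^2+3s}{2}$, $wt(v_1)=\tfrac{9s^2+s}{2}$, $wt(u_0)=5s^2+s$, $wt(u_2)=7s^2+s$, $wt(u_1)=12s^2$) and simply asserts distinctness, whereas you keep the three orientation parameters and argue by residue class modulo $4$, magnitude separation, and finite checks. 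Your weight formulas are correct, your identification of the only delicate hub comparison ($wt(v_1)$ versus $wt(u_0)$) is right, and your explicit triple $(\delta_1,\delta_2,\delta_3)=(1,1,0)$ for $s=2$ does yield pairwise distinct weights ($16,20,21,28,51$ against $1,\dots,4$ and $11,15,19,23,27,31$). In fact your added flexibility is not cosmetic: the paper's fixed choice fails at $s=2$, where $wt(v_1)=\tfrac{9s^2+s}{2}=19$ coincides with $wt(b_1^1)=8s+3=19$, so your parametrized version is the argument that actually proves the lemma for all $s\ge 2$. Two points to tighten when writing it out: the $\delta_i$ are not fully independent ($\delta_1$ shifts $wt(v_0)$ up and $wt(v_1)$ down, and $\delta_2+\delta_3$ shifts $wt(u_1)$), which is harmless only because $wt(u_1)\ge 12s^2>16s-1$ never enters the degree-two range and $wt(v_1)$ enters it only for $s\le 3$, so say this explicitly; and the promised admissible triples for $s=3,4$ should be recorded, noting that for $s=4$ one must avoid $\delta_1=1$ (since $wt(v_0)=58+\delta_1$ would hit the degree-two weight $59$) — for instance $(1,1,0)$ works for $s=3$ and $(2,1,0)$ works for $s=4$.
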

\begin{proof}
In this case consider an edge labeling $f$ of $K_{1,s} \times P_{5}$ in the following way
\begin{align*}
f(a_{0}^{j} v_0)=&j, &&\hspace*{-0cm}\text{for $j=1, 2, \dots, s$},\\
f(v_1 a_{2}^{j})=&s+j, &&\hspace*{-0cm}\text{for $j=1, 2, \dots, s$},\\
f(v_1 a_{1}^{j})=& 2s+2j-1, &&\hspace*{-0cm}\text{for $j=1, 2, \dots, s$},\\
f(v_0 a_{1}^{j})=& 2s+2j, &&\hspace*{-0cm}\text{for $j=1, 2, \dots, s$},\\
f(u_1 b_{1}^{j}) =& 4s+2j -1, &&\hspace*{-0cm} \text{for $j=1,2, \dots, s$}, \\
f(u_0 b_{1}^{j}) =& 4s+2j , &&\hspace*{-0cm} \text{for $j=1,2, \dots, s$}, \\
f(u_1 b_{2}^{j}) =& 6s+2j-1, &&\hspace*{-0cm} \text{for $j=1,2, \dots, s$}, \\
f(u_2 b_{2}^{j}) =& 6s+2j, &&\hspace*{-0cm} \text{for $j=1,2, \dots, s$}.
\end{align*}
Evidently $f$ is a bijection. The vertex weights are
\begin{align*}
wt_{f} (a_0^j)=& f (a_0^j v_0)=j,&&\hspace*{-0cm}\text{for $j=1, 2, \dots, s$},\\
wt_{f}(a_{2}^{j})=& f (a_{2}^{j}v_1)=s+j,&&\hspace*{-0cm}\text{for $j=1, 2, \dots, s$},\\
wt_{f}(a_{1}^{j}) =& \sum_{j=1}^{s} f (v_{0} a_{1}^{j} ) + \sum_{j=1}^{s} f (v_{1} a_{1}^{j} ) = 4s+4j-1,&&\hspace*{-0cm}\text{for $j=1, 2, \dots, s$},\\
wt_{f}(b_{1}^{j}) =& \sum_{j=1}^{s} f (u_{0} b_{1}^{j} ) + \sum_{j=1}^{s} f (u_{1} b_{1}^{j} ) = 8s+4j-1,&&\hspace*{-0cm}\text{for $j=1, 2, \dots, s$},\\
wt_{f}(b_{2}^{j}) =& \sum_{j=1}^{s} f (u_{1} b_{2}^{j} ) + \sum_{j=1}^{s} f (u_{2} b_{2}^{j} ) = 12s+4j-1,&&\hspace*{-0cm}\text{for $j=1, 2, \dots, s$},\\
wt_{f}(v_{0}) =& \sum_{j=1}^{s} f (v_{0} a_{0}^{j} ) + \sum_{j=1}^{s} f (v_{0} a_{1}^{j} ) = \frac{7s^2 + 3s}{2} , \\
wt_{f}(v_{1}) =& \sum_{j=1}^{s} f (v_{1} a_{1}^{j} ) + \sum_{j=1}^{s} f (v_{1} a_{2}^{j} ) = \frac{9s^2 + s}{2} ,\\ 
wt_{f}(u_0) =& \sum_{j=1}^{s} f (u_{0} b_{1}^{j} ) = 5s^2 +s, \\
wt_{f}(u_2) =& \sum_{j=1}^{s} f (u_{2} b_{2}^{j} ) = 7s^2 +s,\\ 
wt_{f}(u_1) =& \sum_{j=1}^{s} f (u_{1} b_{1}^{j} ) + \sum_{j=1}^{s} f (u_{1} b_{2}^{j} )  = 12s^2. 
\end{align*}
It is clear from the above vertex sums that for $s\ge 2$ the weights of the vertices are distinct.
\end{proof}

\begin{theorem}\label{odd}
The graph $K_{1,s} \times P_{2m+1}$ is antimagic for $s\geq 2$, $m \geq 3$.
\end{theorem}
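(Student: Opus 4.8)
The plan is to produce an explicit antimagic labeling, following the layered scheme used in the proof of Theorem~\ref{even, s>2}. By Corollary~\ref{Exactly} the graph $K_{1,s}\times P_{2m+1}$ has exactly two components: component $A$ carried by the vertices $a_i^j,v_i$ and component $B$ carried by the vertices $b_i^j,u_i$. In $A$ the vertices $a_0^j$ and $a_m^j$ are pendant, the vertices $a_i^j$ with $1\le i\le m-1$ have degree $2$, and $v_0,\dots,v_{m-1}$ have degree $2s$; in $B$ the vertices $b_i^j$ have degree $2$, the vertices $u_1,\dots,u_{m-1}$ have degree $2s$, and the two end-hubs $u_0,u_m$ have degree $s$. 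There are $4ms$ edges in all.

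First I would reserve the $2s$ smallest labels $1,2,\dots,2s$ for the pendant edges $a_0^jv_0$ and $a_m^jv_{m-1}$, so the $2s$ degree-one weights are exactly $1,\dots,2s$ — mutually distinct and smaller than every other vertex weight — while keeping a free parameter $\varepsilon$ (in a short range) that fixes the value $\sum_j f_\varepsilon(a_0^jv_0)=\tfrac{s(s+1)}2+\varepsilon$ and hence the weight of $v_0$ (the weight of $v_{m-1}$ then absorbs the complementary shift), and a second, analogous parameter obtained by letting the $s$ labels of the extremal blob $B_1$ split non-trivially between $u_0$ and $u_1$. Next, working layer by layer, I would give the two edges incident to an internal vertex $a_i^j$ the consecutive pair of labels $\{2is+2j-1,\,2is+2j\}$, and to $b_i^j$ the analogous consecutive pair shifted by $2ms$, alternating according to the parity of $i$ which of the two edges receives the larger label, exactly as in Theorem~\ref{even, s>2}. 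This uses $\{1,\dots,4ms\}$ bijectively and forces each degree-two weight to equal (a translate of) $4is+4j-1$, hence to be $\equiv 3\pmod 4$; since these values form a union of arithmetic progressions of common difference $4$, the degree-two weights are pairwise distinct, and being odd they are automatically disjoint from the high-degree weights produced below.

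It remains to handle the degree-$2s$ hubs and the two degree-$s$ hubs. Because the label pairs are packed consecutively layer by layer and alternated by parity, the contribution of layer $i$ to its hub changes by roughly $4s^2$ as $i$ increases, which yields monotone chains $wt(v_0)<wt(v_1)<\dots<wt(v_{m-1})$ and $wt(u_1)<\dots<wt(u_{m-1})$ of (after fixing parities, essentially even) mutually distinct values, automatically disjoint from the odd degree-two weights. I expect the \emph{main obstacle} to be the boundary hubs $v_0,v_{m-1}$ (adjacent to a pendant layer) and especially the low-degree hubs $u_0,u_m$, whose weights are comparatively small and could a priori coincide with some degree-two weight, with $u_1$ or $u_{m-1}$, or with each other. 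These I would rule out as in Theorem~\ref{even, s>2}: a coincidence not involving a free parameter reduces, after canceling, to a divisibility or parity obstruction of the form $s\cdot(\text{small integer})=(\text{small integer})$ that is impossible for $s\ge2$ (with a couple of borderline subcases needing $s\ge3$); a coincidence involving $v_0$ or $u_0$ is excluded for all but a bounded number of parameter values, since admissible hub weights are spaced at least $4s^2-2s$ apart whereas varying the parameter only moves the relevant weight through consecutive integers, so a suitable choice (pigeonholing over $\varepsilon\in\{0,1,2,3\}$ and the analogous window for $B$) makes all vertex weights distinct. The hypothesis $m\ge3$ is exactly what guarantees enough internal layers for the packing to leave no overlaps among the hub weights, while the remaining cases $m=1,2$ are precisely Lemmas~\ref{odd, P3} and~\ref{odd, P5}.
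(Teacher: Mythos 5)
Your proposal follows essentially the same construction and verification as the paper's proof: labels $1,\dots,2s$ on the pendant edges with adjustable sum $\tfrac{s(s+1)}{2}+\varepsilon$, consecutive label pairs on the degree-two vertices alternating with the parity of $i$ so their weights are $\equiv 3 \pmod 4$, monotone chains for the degree-$2s$ hubs, parity/divisibility arguments for $u_0$ and $u_m$, and a pigeonhole over $\varepsilon\in\{0,1,2,3\}$ (the paper needs no second parameter on the $B$-side, handling $u_0$ directly, and its only ``$s\ge 3$'' subcase occurs when $s$ is odd, so $s=2$ is covered). The one bookkeeping slip is the shift for the $B$-side pairs: since the $A$-component uses only $2ms$ labels ($2s$ pendant ones plus interior layers $i=1,\dots,m-1$), the shift must be $2(m-1)s$ rather than $2ms$ to obtain a bijection onto $\{1,\dots,4ms\}$; with that correction your outline coincides with the paper's argument.
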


\begin{proof}
Let us define an edge labeling $f_{\varepsilon}$, ${\varepsilon} \in \left\{0,1 \dots, s^2\right\}$, of $K_{1,s} \times P_{2m+1}$, $s\ge 2$, $m\ge 3$, such that the pendant edges are labeled as follows:
\begin{align}\label{odd1}
\{f_{\varepsilon}(a_{0}^{j} v_0), f_{\varepsilon}(v_{m-1}a_{m}^{j}): j=1, 2, \dots, s\}=&\{1, 2, \dots, 2s\}
\end{align}
and
\begin{align}\label{odd2}
\sum_{j=1}^{s}f_{\varepsilon}(a_0^jv_0)=\frac{s(s+1)}{2}+\varepsilon,
\end{align}
thus 
\begin{align}\label{odd3}
\sum_{j=1}^{s}f_{\varepsilon}(a_{m}^{j}v_{m-1})=s^2+\frac{s(s+1)}{2}-\varepsilon.
\end{align}
The labels of the remaining edges are 
\begin{align*}
 f_{\varepsilon}(v_{i-1}a_{i}^{j}) =&
        \begin{cases} 
				   2j+  2is, & \text{for $i\equiv 1\pmod 2$, $1\le i\le m-1$ and $j=1, 2, \dots, s$},\\
					2j +2is-1, & \text{for $i\equiv 0\pmod 2$, $2\le i\le m-1$ and $j=1, 2, \dots, s$},
\end{cases}\\
f_{\varepsilon}(a_i^j v_{i}) =&
        \begin{cases} 
				     2j +2is-1, & \text{for $i\equiv 1\pmod 2$, $1\le i\le m-1$ and $j=1, 2, \dots, s$},\\
						 2j+  2is, & \text{for $i\equiv 0\pmod 2$, $2\le i\le m-1$ and $j=1, 2, \dots, s$},
\end{cases}\\
f_{\varepsilon}(u_{i-1}b_i^j)=&
        \begin{cases} 
				  2j+2is+2(m-1)s, & \text{for $i\equiv 1\pmod 2$, $1\le i\le m$ and $j=1, 2, \dots, s$},\\
					2j+2is+2(m-1)s-1, & \text{for $i\equiv 0\pmod 2$, $2\le i\le m$ and $j=1, 2, \dots, s$},
\end{cases}\\
f_{\varepsilon}(b_i^ju_{i})=&
        \begin{cases} 
				     2j+2is+2(m-1)s-1, & \text{for $i\equiv 1\pmod 2$, $1\le i\le m$ and $j=1, 2, \dots, s$},\\
						 2j+2is+2(m-1)s & \text{for $i\equiv 0\pmod 2$, $2\le i\le m$ and $j=1, 2, \dots, s$}.
\end{cases}
\end{align*}
Evidently, the edge labels   are  distinct numbers from $1$ to $4ms$. Now, we evaluate the induced vertex labels under the function $f_\varepsilon$. The weights of vertices of degree one is
\begin{align*}
wt_{f_{\varepsilon}}(a_0^j)= &f_{\varepsilon}(a_0^j v_0),\\
wt_{f_{\varepsilon}}(a_0^m)=& f_{\varepsilon}(v_{m-1}a_{0}^{j})
\end{align*}
for $j=1,2, \dots, s$. According to (\ref{odd1}) we get that they are distinct numbers from $1$ to $2s$.

For the weights of vertices of $a_{i}^{j}$, $i= 1,2, \dots, m-1$, $j = 1,2, \dots, s$, we obtain
$$
wt_{f_{\varepsilon}} (a_{i}^{j}) = f_{\varepsilon} (v_{i-1}a_{i}^{j}) + f_{\varepsilon} (a_{i}^{j} v_i) = 4j+4is-1
$$
thus they are odd numbers from the set $\left\{4s+3,4s+7, \dots, 4ms-1\right\}$, i.e., they are congruent 3 modulo 4.

For the weights of vertices of $b_{i}^{j}$, $i = 1,2, \dots, m$, $j = 1,2, \dots, s$, we have
$$
wt_{f_{\varepsilon}} (b_{i}^{j}) = f_{\varepsilon} (u_{i-1} b_{i}^{j}) + f_{\varepsilon} (b_{i}^{j} u_i) = 4ms+4is+4j-4s-1
$$
which are odd numbers from the set $\left\{4ms+3, 4ms+7, \dots, 8ms-1\right\}$. Again these numbers are   congruent 3 modulo 4.

Now we check the weights of vertices of degree $4s$. For $v_i$, $i = 1,2, \dots, m-2,$ and $u_i$, $i=1,2, \dots, m-1$, we get
\begin{align*}
wt_{f_{\varepsilon}}(v_i)=& \sum_{j=1}^{s} f_{\varepsilon}(a_i^jv_i) + \sum_{j=1}^{s} f_{\varepsilon}(v_{i} a_{i+1}^{j}) =
     \begin{cases}
		            4is^2 +4s^2  , & \text{when $i$ is  odd, $1 \leq i \leq m-2$},\\
		            4is^2+4s^2+2s , & \text{when $i$ is  even, $2 \leq i \leq m-2$},
			\end{cases}					\\
wt_{f_{\varepsilon}}(u_i)=& \sum_{j=1}^{s} f_{\varepsilon}(b_i^j u_i) + \sum_{j=1}^{s} f_{\varepsilon}( u_{i}b_{i+1}^j) =
     \begin{cases}
		            4ms^2+4s^2 i , & \text{when $i$ is  odd}, 1 \leq i \leq m-1\\
								4ms^2 + 4s^2 i +2s , & \text{when $i$ is  even}, 2 \leq i \leq m-1.
			\end{cases}					
\end{align*}
Next we evaluate the weights of vertices of degree $s$. We get
\begin{align*}
wt_{f_{\varepsilon}}(u_0)=& \sum_{j=1}^{s} f_{\varepsilon} (b_{1}^{j} u_0) = 2ms^2 + s^2 +s, \\
wt_{f_{\varepsilon}}(u_m)=& \sum_{j=1}^{s} f_{\varepsilon}(b_m^j u_m)  =
     \begin{cases}
		            4ms^2 - s^2 , & \text{when $m$ is  odd},\\
								4ms^2 - s^2 + s , & \text{when $m$ is  even}.
			\end{cases}					
\end{align*}
Finally, according to (\ref{odd2}) and (\ref{odd3}). We get
\begin{align*}
wt_{f_{\varepsilon}}(v_0)=& \sum_{j=1}^{s} f_{\varepsilon} (a_{0}^{j} v_0) + \sum_{j=1}^{s} f_{\varepsilon} (a_{1}^{j} v_0) = \frac{s(s+1)}{2} +3s^2 +s + \varepsilon,\\
wt_{f_{\varepsilon}}(v_{m-1})=& \sum_{j=1}^{s} f_{\varepsilon}(a_m^j v_{m-1}) +  \sum_{j=1}^{s} f_{\varepsilon}(a_{m-1}^j v_{m-1}) =
     \begin{cases}
		            2ms^2 + \frac{s(s+1)}{2} - \varepsilon , & \text{when $m$ is  even},\\
								2ms^2 + \frac{s(s+1)}{2} +s - \varepsilon , & \text{when $m$ is  odd}.
			\end{cases}					
\end{align*}
Note that only the weights of vertices $v_0$, $v_{m-1}$ and $a_{0}^{j}$, $a_{m}^{j}$, $j = 1,2, \dots, s$, depend on the value of $\varepsilon$. 

Evidently, the weights of the vertices of degree $1$ are distinct and they are different (smaller) from all the other vertex weights.

Moreover, as the weights of the vertices of degree $2$ are odd, they are different from the weights of the vertices $v_i$, $i = 1,2, \dots, m-2$ and $u_i$, $i = 1,2, \dots, m-1$, as they are all even numbers. Moreover, it is easy to see that
\begin{align*}
wt_{f_\varepsilon} (v_0) <& wt_{f_\varepsilon} (v_1) < wt_{f_\varepsilon} (v_2) < \dots < wt_{f_\varepsilon} (v_{m-2}) < wt_{f_\varepsilon} (u_1) < wt_{f_\varepsilon} (u_2) < \dots < wt_{f_\varepsilon} (u_{m-1}),\\
wt_{f_\varepsilon} (u_0) <& wt_{f_\varepsilon} (u_m) < wt_{f_\varepsilon} (u_1).
\end{align*}
Moreover, for $m\ge 3$ and $s\ge 2$ we get that for every ${\varepsilon} \in \left\{0,1 \dots, s^2\right\}$ also holds
$$
wt_{f_\varepsilon} (v_0) < wt_{f_\varepsilon} (v_{m-1}) < wt_{f_\varepsilon} (u_0).
$$
Now, we prove that  also the other vertex weights are distinct. To prove it we show the following. 
\begin{itemize}

\item $wt_{f_\varepsilon} (u_m)  \neq wt_{f_\varepsilon} (v_i)$ for every $i = 1,2, \dots, m-2.$ This follows from the fact that,
$$
 wt_{f_\varepsilon} (v_{m-2}) \le   4ms^2  -4s^2 +2s < 4ms^2 - s^2 \le  wt_{f_\varepsilon} (u_m).
$$ 
\item 
 $wt_{f_\varepsilon} (u_m)$ is distinct from the weight of the vertices of degree $2$.
When $m$ is even then $wt_{f_\varepsilon} (u_m) = 4ms^2 -s^2 +s$ is even.
Also when $m$ is odd and $s$ is even we get that $wt_{f_\varepsilon} (u_m) = 4ms^2 -s^2$ is even. Thus in these cases $wt_{f_\varepsilon} (u_m)$  is different from the weights of vertices of degree $2$  as these weights are odd.

When both $m$ and $s$ are odd, $s\ge 3$ then 
\begin{align*}
wt_{f_\varepsilon} (u_m) = s^2(4m-1)\ge 3s(4m-1)>8ms>8ms-1=wt_{f_\varepsilon} (b_m^s).
\end{align*}
  Thus also in this case  $wt_{f_\varepsilon} (u_m)$ is different from  the weights of the vertices of degree 2.

\item  $wt_{f_\varepsilon} (u_0)  \neq wt_{f_\varepsilon} (v_i)$ for every $i = 1,2, \dots , m-2$.

By contradiction. Consider that there exists  $t \in \left\{1,2, \dots, m-2\right\}$ such that $wt_{f_\varepsilon} (u_0) = wt_{f_\varepsilon} (v_t)$. Then
\begin{align*}
2ms^2 + s^2 +s =&  wt_{f_\varepsilon} (u_0) = wt_{f_\varepsilon} (v_t) = 
     \begin{cases}
		           4ts^2 + 4s^2 +2s , & \text{when $t$ is  even},\\
								4ts^2 + 4s^2 , & \text{when $t$ is  odd},
			\end{cases}\\					
s(2m-4t-3) =& 
     \begin{cases}
		           1 , & \text{when $t$ is  even},\\
								-1 , & \text{when $t$ is  odd}.
			\end{cases}					
\end{align*}
However, this is not possible when $s \geq 2$.

\item 
 $wt_{f_\varepsilon} (u_0)$ is distinct from the weight of the vertices of degree $2$.
 
This follows from the fact that $wt_{f_\varepsilon} (u_0) = 2ms^2 + s^2 + s$ is even and the weights of the vertices of degree $2$   are odd.

\item 
Now, we prove that for at least one integer $\varepsilon^{*}$ from the set $\left\{0,1,2,3\right\}$ under the labeling $f_{\varepsilon^*}$ 
   the weight of the vertex $v_0$ is different from the weights of the vertices of degree $2$,
   the weight of the vertex $v_{m-1}$ is different from the weight of the vertices of degree $2$ 
   and also $wt_{f_\varepsilon} (v_{m-1})\ne wt_{f_\varepsilon} (v_i)$ for every $i = 1,2, \dots, m-2$.

But this follows from the fact that the difference between two weights of vertices of degree $2$ is  four and the difference between the weight of vertices $v_i$, $i = 1,2, \dots, m-2, $ is at least $4s^2+2s$.
\end{itemize}

Figure \ref{Figure 2} illustrates an  antimagic labeling of $K_{1,3}\times P_{7}$.
\begin{figure}[ht!]
\begin{center}
\includegraphics{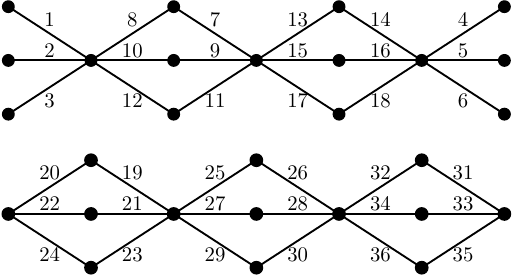} 
\caption{An antimagic labeling of  of $K_{1,3}\times P_{7}$.}
\label{Figure 2}
\end{center}
\end{figure}

Combining the previous the proof is completed.
\end{proof}
According to Lemmas \ref{odd, P3}, \ref{odd, P5} and Theorem \ref{odd} we obtain the following result for the direct product of a star and an odd path.
\begin{theorem}\label{odd all}
The graph $K_{1,s} \times P_{2m+1}$ is antimagic for $s\geq 2$, $m \geq 1$.
\end{theorem}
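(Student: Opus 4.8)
The plan is to obtain the statement as an immediate consequence of the three results already established in this section, by partitioning the admissible range of the parameter $m$. Since $m$ ranges over all integers $m\ge 1$, I would distinguish the cases $m=1$, $m=2$, and $m\ge 3$, and in each case invoke the matching earlier result.

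First I would observe that for $m=1$ we have $2m+1=3$, so $K_{1,s}\times P_{2m+1}$ is the graph $K_{1,s}\times P_3$, which is antimagic for every $s\ge 2$ by Lemma \ref{odd, P3}. Next, for $m=2$ we have $2m+1=5$, so $K_{1,s}\times P_{2m+1}$ is the graph $K_{1,s}\times P_5$, which is antimagic for every $s\ge 2$ by Lemma \ref{odd, P5}. Finally, for $m\ge 3$ the graph $K_{1,s}\times P_{2m+1}$ is antimagic for every $s\ge 2$ by Theorem \ref{odd}. Since the three ranges $\{1\}$, $\{2\}$ and $\{m\ge 3\}$ together exhaust all integers $m\ge 1$, this covers every pair $(s,m)$ with $s\ge 2$ and $m\ge 1$, which completes the proof.

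The real content lies entirely in the earlier results rather than in this combination. The small cases $P_3$ and $P_5$ require separate ad hoc labelings precisely because the parametrized family $f_\varepsilon$ of Theorem \ref{odd} — and the chain of ordering inequalities among the weights of $v_0$, $v_{m-1}$ and $u_0$ on which it depends — presupposes $m\ge 3$; Theorem \ref{odd} then does the heavy lifting for the remaining (infinitely many) cases via the family $\{f_\varepsilon\}_{\varepsilon}$ together with the pigeonhole choice of $\varepsilon^*$. Consequently there is no genuine obstacle at this stage: the only thing to check is the bookkeeping that the union of the three ranges of $m$ leaves no admissible value uncovered, which is immediate, and that none of the exceptional pairs of Theorem \ref{main} falls into the odd-path regime (indeed $(1,2)$, $(1,3)$, $(2,2)$ all have either $s=1$ or even second coordinate, hence are irrelevant here).
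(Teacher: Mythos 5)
Your proposal is correct and follows exactly the paper's own argument: the cases $m=1$ and $m=2$ are covered by Lemmas \ref{odd, P3} and \ref{odd, P5} respectively, and $m\ge 3$ by Theorem \ref{odd}, which together exhaust all $m\ge 1$. Nothing further is needed.
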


\section{ Concluding remarks}
Our main motivation in this paper is to study antimagicness of the disconnected graphs which are constructed using some known graph operations. We used the direct product as an operation to construct new classes of disconnected graphs. We gave a characterization of antimagicness of the  direct product of a star  and a path.
As the main result we obtained that the graph   $K_{1,s}\times P_{n}$ is antimagic for all positive integers $s$, $n\ge 2$ except three cases when  $(s,n)\in \{(1,2), (1,3), (2,2)\}$.
Finally, we conclude our paper with the following question. \\
\textbf{Problem 1.}  What are the other classes of disconnected graphs constructed from the direct product of graphs or from any graph operation  which admit an antimagic labeling?

\section{  Acknowledgement}
The research for this article was supported by the Slovak Research and Development Agency under the contract APVV-19-0153 and by VEGA 1/0243/23.


\begin{thebibliography}{99}
		
	
	\bibitem{Alon}  N. Alon, G. Kaplan, A. Lev, Y. Roditty and R. Yuster, {\em Dense graphs are antimagic}, J. Graph Theory \textbf{47} (4), 297-309, 2004.

\bibitem{Baca} M. Ba\v{c}a, O. Phanalasy, J. Ryan and A. Semani\v{c}ov\'{a}-Fe\v{n}ov\v{c}\'{i}kov\'{a},  {\em Antimagic labelings of join graphs}, Math. Comput. Sci. \textbf{9} (2), 139- 143, 2015.


\bibitem{Chenn} Z.C. Chen, K.C. Huang, C. Lin, J.L. Shang and M.J. Lee, {\em Antimagicness of star forests}, Util. Math. \textbf{114}, 283-294, 2020.

\bibitem{ycheng1} Y. Cheng, {\em Lattice grids and prims are antimagic}, Theor. Comput. Sci.  \textbf{374} (1-3), 66-73, 2007.

\bibitem{ChengDM} Y. Cheng, {\em A new class of antimagic Cartesian product graphs}, Discret. Math. \textbf{308} (24), 6441-6448, 2008.

\bibitem{Daykin} J.W. Daykin, C.S. Iliopoulos, M. Miller and O. Phanalasy, {\em Antimagicness of generalized corona and snowflake graphs}, Math. Comput. Sci.  \textbf{9}, 105-111, 2015.

\bibitem{Ringel1} N. Hartsfield and G. Ringel, {\em Pearls in Graph Theory: A Comprehensive Introduction}, Academic Press: Boston, MA, USA, 1990

\bibitem{LiangTCS} Y.C. Liang and X. Zhu, {\em Anti-magic labelling of Cartesian product of graphs}, Theor. Comput. Sci.  \textbf{477}, 1-5, 2013.

\bibitem{OudoneMCS} O. Phanalasy, M. Miller, C.S. Iliopoulos, S.P. Pissis and  E. Vaezpour, {\em Construction of antimagic labeling for the Cartesian product of regular graphs}, Math. Comput. Sci. \textbf{5} (1), 81-87, 2011.

\bibitem{Sampathkumar} E. Sampathkumar, {\em  On tensor product of graphs}, J. Aust. Math. Soc. \textbf{20} (3), 268-273, 1975. 

\bibitem{Shang2} J.-L. Shang, {\em $P_2, P_2, P_4$ free linear forests are antimagic}, Util. Math. \textbf{101}, 13-22, 2016.


\bibitem{ShangStar} J.-L. Shang, C. Lin and S.C. Liaw, {\em On the antimagic labeling of star forests}, Util. Math. \textbf{97}, 373-385, 2015.



\bibitem{WangDM} T.M. Wang and C.C. Hsiao, {\em On anti-magic labeling for graph products}, Discret. Math. \textbf{308} (16), 3624-3633, 2008.

\bibitem{WANG}  T. Wang, M. Liu and D. Li, {\em Some classes of disconnected antimagic graphs and their joins}, Wuhan Univ. J. Nat. Sci. \textbf{17}, 195-199, 2012. 

\bibitem{TaoWANG} T. Wang, M.J. Liu and D.M. Li, {\em A class of antimagic join graphs}, 	Acta Math. Sin. Engl. Ser. \textbf{29}, (5), 1019-1026, 2013.

\bibitem{Weichsel} P.M. Weichsel, {\em The Kronecker product of graphs}, Proc. Am. Math. Soc. \textbf{13}, 47-52, 1963).

\bibitem{WenhuiAKCE} M. Wenhui, D. Guanghua, L. Yingyu and N. Wang, {\em Lexicographic product graphs $P_m \left[P_n\right]$ are antimagic}, AKCE Int. J. Graphs Comb. \textbf{15}, (3), 271-283, 2018.

\bibitem{Yilma} Z.B. Yilma, {\em Antimagic properties of graphs with large maximum degree}, J. Graph Theory \textbf{72}, (4), 367-373,2013.

%
\bibitem{YingyuACTA} L. Yingyu, D. Guanghua and N. Wang, {\em Antimagicness of Lexicographic product graph $G\left[P_n\right]$}, Acta Math. Appl. Sin. Engl. Ser. \textbf{36}, (3), 603-619, 2020.

\bibitem{YingyuMCS} L. Yingyu, D. Guanghua, M. Wenhui and N. Wang, {\em Antimagic labeling of the lexicographic product graph $K_{m,m} \left[P_k\right]$},   Math. Comput. Sci. \textbf{12}, (1), 77-90, 2018.



\end{thebibliography}
\end{document}